\numberwithin{equation}{section}
\newtheorem{Theorem}{Theorem}[section]
\newtheorem{Lemma}[Theorem]{Lemma}
\newtheorem{Proposition}[Theorem]{Proposition}
 { \theoremstyle{definition}

\newtheorem{Remark}[Theorem]{Remark} }
\DeclareMathOperator{\rk}{rk} 
\DeclareMathOperator{\Pic}{Pic} 
\DeclareMathOperator{\pdeg}{pardeg} 
\DeclareMathOperator{\pmu}{par\mu} 
\DeclareMathOperator{\End}{End} 
\DeclareMathOperator{\tr}{Tr} 
\DeclareMathOperator{\GL}{GL}
\DeclareMathOperator{\GPH}{GPH}
\DeclareMathOperator{\Hitchin}{Hitchin}
\newcommand{\cC}{\mathcal{C}}
\newcommand{\cF}{\mathcal{F}}
\newcommand{\cG}{\mathcal{G}} 
\newcommand{\cO}{\mathcal{O}} 
\newcommand{\cM}{\mathcal{M}} 
\newcommand{\cS}{\mathcal{S}} 
\newcommand{\cH}{\mathcal{H}}
\newcommand{\CC}{\mathbb{C}} 
\newcommand{\PP}{\mathbb{P}} 
\newcommand{\QQ}{\mathbb{Q}} 
\newcommand{\ZZ}{\mathbb{Z}} 
\begin{document}

\allowdisplaybreaks

\newcommand{\arXivNumber}{1809.06021}

\renewcommand{\thefootnote}{}

\renewcommand{\PaperNumber}{024}

\FirstPageHeading

\ShortArticleName{On Higgs Bundles on Nodal Curves}

\ArticleName{On Higgs Bundles on Nodal Curves\footnote{This paper is a~contribution to the Special Issue on Geometry and Physics of Hitchin Systems. The full collection is available at \href{https://www.emis.de/journals/SIGMA/hitchin-systems.html}{https://www.emis.de/journals/SIGMA/hitchin-systems.html}}}

\Author{Marina LOGARES}

\AuthorNameForHeading{M.~Logares}

\Address{School of Computing Electronics and Mathematics, University of Plymouth,\\ Drake Circus, PL4 8AA, UK}
\Email{\href{mailto:marina.logares@plymouth.ac.uk}{marina.logares@plymouth.ac.uk}}
\URLaddress{\url{https://www.plymouth.ac.uk/staff/marina-logares}}

\ArticleDates{Received October 16, 2018, in final form March 14, 2019; Published online March 28, 2019}

\Abstract{This is a review article on some applications of generalised parabolic structures to the study of torsion free sheaves and $L$-twisted Hitchin pairs on nodal curves. In particular, we survey on the relation between representations of the fundamental group of a nodal curve and the moduli spaces of generalised parabolic bundles and generalised parabolic $L$-twisted Hitchin pairs on its normalisation as well as on an analogue of the Hitchin map for generalised parabolic $L$-twisted Hitchin pairs.}

\Keywords{Higgs bundles; nodal curves; generalised parabolic structures}

\Classification{14H60; 14D20}

\renewcommand{\thefootnote}{\arabic{footnote}}
\setcounter{footnote}{0}

\section{Introduction}
The moduli spaces of Higgs bundles on a Riemann surface are rich geometric objects which have been studied extensively in the last decades. They were introduced in~\cite{hitchin:1987} as a special class of solutions to the self-dual Yang--Mills equations. These are equations for a connection and a principal bundle on the Euclidean 4-space but, after a dimensional reduction process and by considering their conformal invariance property, can be defined on a Riemann surface. The corresponding solutions are called Higgs bundles.

A \emph{Higgs bundle} on a compact Riemann surface $X$, consist of a holomorphic vector bundle on~$X$ together with a section of the endomorphism of the bundle twisted with the canonical bundle of~$X$ which is called the \emph{Higgs field}.

One of the most celebrated results regarding this moduli space is the so called non-abelian Hodge correspondence which extends a result by Narasimhan and Seshadri in \cite{narasimhan-seshadri:1965} regarding an isomorphism between the moduli space of stable vector bundles of rank~$r$ and degree~$0$ on a Riemann surface $X$ and the moduli space of unitary representations of the fundamental group of~$X$. The non-abelian Hodge correspondence was proved by the combined results of Corlette~\cite{corlette:1988}, Donaldson \cite{donaldson:1987}, Hitchin \cite{hitchin:1987} and Simpson \cite{simpson:1992}. Under this correspondence, the moduli space of stable Higgs bundles of rank~$r$ and degree~$0$ is analytically isomorphic to the moduli space of irreducible representations of the fundamental group of $X$ into $\GL(r,\CC)$.

Many generalisations of this correspondence and the objects involved have been considered along the last decades. For instance, we may want to study representations of the fundamental group of~$X$ with a~finite set of points $\{x_{i}\}_{i=1}^{s}$ removed, that is $\rho\colon \pi_{1}\big(X\setminus\{x_{i}\}_{i=1}^{s}\big)\rightarrow \GL(r,\CC)$. The corresponding objects in this case are the so called \emph{parabolic Higgs bundles}, which consist of Higgs bundles on~$X$ together with an extra structure called \emph{parabolic structure} defined over the points $\{x_{i}\}_{i=1}^{s}$. In~\cite{simpson:1990} Simpson showed an equivalence of categories between parabolic Higgs bundles and filtered local systems and proved that the non-abelian Hodge correspondence extends to the non-compact situation. In particular, the moduli space of representations of $\pi_{1}\big(X\setminus\{x_{i}\}_{i=1}^{s}\big)$ into $\GL(r,\CC)$ with prescribed monodromy around the points $\{x_{i}\}_{i=1}^{s}$ is real analytically isomorphic to the moduli space of stable parabolic Higgs bundles of rank~$r$, parabolic degree~$0$ (as defined in~(\ref{def:pdeg})) and prescribed parabolic structure around the punctures. Our aim in the present paper is to review the situation where instead of a punctured Riemann surface we have an irreducible algebraic curve with only nodal singularities. In order to do so we shall need to further explore the notion of parabolic structure on a bundle.

Parabolic structures on bundles were introduced by Seshadri in~\cite{seshadri:1969}. In~\cite{mehta-seshadri:1980} Mehta and Seshadri constructed the moduli space of parabolic bundles and proved that the set of equivalent classes of irreducible representations of the fundamental group of a punctured Riemann surface, can be identified with equivalence classes of stable parabolic bundles of parabolic degree $0$ on the Riemann surface. These parabolic structures were generalised to the so called \emph{generalised parabolic structures} by Bhosle in~\cite{bhosle:1992}. Generalised parabolic line bundles appeared already in the work of Oda and Seshadri \cite{oda-seshadri} when studying a~desingularisation of the compactified Jacobian of an integral curve. In~\cite{bhosle:1992} Bhosle constructed the moduli space of generalised parabolic bundles for any rank and gave conditions for it being non-singular. Moreover, she applied her results to the study of the moduli space of torsion free sheaves on a nodal curve. She also proved that given an integral projective curve $Y$ with only nodes as singularities, and its normalisation, $p\colon X\rightarrow Y$, the moduli space of generalised parabolic bundles of rank one on~$X$ is a non-singular projective variety and it is in fact a desingularisation of the compactified Jacobian of~$Y$.

The study of torsion free sheaves on $Y$ by means of bundles with extra structure on $X$ was already addressed by Seshadri in \cite{seshadri:1982}. Bhosle in \cite{bhosle:1992} proves a very successful correspondence between generalised parabolic bundles on~$X$ and torsion free sheaves on~$Y$. This correspondence preserves the degree and rank, thus the moduli space of generalised parabolic bundles turns out to be an excellent tool for the study of the moduli spaces of torsion free sheaves on~$Y$.

As we already mentioned, in \cite{mehta-seshadri:1980} Mehta and Seshadri proved that the well-known theorem of Narasimhan and Seshadri~\cite{narasimhan-seshadri:1965} extends to a correspondence between bundles with parabolic structures and unitary representations of punctured Riemann surfaces. Hence, a natural question arises: what would be the relation between the representations of the fundamental group of $Y$ and the moduli space of generalised parabolic bundles. Bhosle addresses this question in~\cite{bhosle:1995} and shows that there is no analogue of the Narasimhan--Seshadri theorem in the nodal case.

Bhosle studied non-unitary representations by introducing the moduli space of \emph{generalised parabolic $L$-twisted Hitchin pairs} on~$X$ in~\cite{bhosle:2014}. These are Higgs bundles twisted by a line bundle~$L$ instead of the canonical bundle of~$X$ together with a generalised parabolic structure. She also finds a correspondence with the moduli space of $L$-twisted Hitchin bundles on $Y$. But the correspondence between generalised parabolic $L$-twisted Hitchin pairs on~$X$ and representations of the fundamental group of~$Y$ is not as straightforward as the non-abelian Hodge correspondence for Higgs bundles (and parabolic Higgs bundles) on~$X$.

\looseness=-2 In \cite{bhosle-parameswaran} Bhosle and Parameswaran introduced the notion of strong semistabilty for bundles on singular curves and constructed a group scheme, $\cG_{Y}$, associated to $Y$ such that strongly semistable bundles on $Y$ come from representations $\operatorname{Rep}(\cG_{Y})$, of $\cG_{Y}$ into the general linear group. In~\cite{bhosle-parameswaran-singh} Bhosle, Parameswaran and Singh generalised this result to Hitchin bundles on~$Y$. They considered the category~$\cS^{H}_{Y}$ of strongly semistable Hitchin bundles on $Y$ and defined a~catego\-ry~$\cC^{H}_{Y}$ as the one given by all maps $f\colon \QQ\rightarrow \cS^{H}_{Y}$. They proved that $\cC^{H}_{Y}$ is a neutral Tannakian category and therefore it defines a group scheme~$\cG^{H}_{Y}$, which they called the \emph{Hitchin holonomy group scheme} of $Y$, such that there is an equivalence of categories between $\cC^{H}_{Y}$ and $\operatorname{Rep}\big(\cG^{H}_{Y}\big)$. Unfortunately, this equivalence of categories does not induce a~non-abelian Hodge correspondence for nodal curves as proved in \cite[Theorem~7.2]{bhosle-parameswaran-singh} (see Section~\ref{sec:rephiggs}). Nevertheless, in \cite[Section~5]{bhosle-biswas-hurtubise} Bhosle, Biswas and Hurtubise provide evidence of a~non-abelian Hodge correspondence for generalised Hitchin bundles, although the correspondence is not yet proven.

As we mentioned above, the moduli spaces of Higgs and parabolic Higgs bundles on $X$ are rich in geometric structure. One of their features being that they carry a symplectic structure and a fibration, known as the \emph{Hitchin fibration}, that makes them algebraic completely integrable systems. Bhosle studied an analogue of the Hitchin fibration for generalised parabolic $L$-twisted Hitchin pairs, showed that it is a proper morphism and studied its fibers. Moreover, she found a relation between the moduli of generalised parabolic $L$-twisted Hitchin bundles and the compactified Jacobian of~$Y$.

This review article is based on lectures for a mini-course held at the University of Illinois at Chicago in 2016. We do not intend to be exhaustive. We shall review briefly the results on generalised parabolic bundles and generalised parabolic Hitchin pairs explained above, with the aim to provide the reader with an intuitive view on the subject as well as point to some open problems. For that purpose, we shall not reproduce the proofs for most of the results addressed but refer to the corresponding literature.

\section{Generalised parabolic bundles}\label{sec:1}

Let $X$ be an irreducible non-singular projective curve over an algebraically closed field $k$. Let~$D$ be an effective Cartier divisor on $X$ and $E$ a vector bundle on $X$ of rank $r$ and degree $d$.

A \emph{quasi-parabolic structure} on $E$ over the divisor $D$ consists of a flag of vector subspaces of the vector space $E|_{D}=E\otimes \cO_{D}$, i.e.,
\begin{gather}\label{def:flag}
\cF(E)\colon \ E|_{D}=F_{0}(E)\supset F_{1}(E)\supset F_{2}(E)\supset \cdots \supset F_{m}(E)=0.
\end{gather}

Let $\{D_{j}\}_{j=1}^{n}$ be a set of finitely many disjoint effective Cartier divisors on~$X$. A \emph{quasi-parabolic bundle} $E$ (QPB in short), is a vector bundle $E$ together with quasi-parabolic structures on each~$D_{j}$. We denote it as $(E,\cF(E))$ where $\cF(E)=\big(\cF(E)^{1}, \ldots, \cF(E)^{n}\big)$ where each $\cF(E)^{j}$ is a flag as in~(\ref{def:flag}).

An \emph{homomorphism of QPBs} $f\colon (E,\cF(E))\rightarrow (E',\cF(E'))$ on $X$, is a homomorphism of bundles $f\colon E\rightarrow E'$ which maps the flag $\cF(E)^{j}$ to the flag $\cF(E')^{j}$ for all $j$.

A \emph{parabolic structure} on $E$ over an effective divisor $D$ consists of a quasi-parabolic structure on $E$ over $D$ and a vector of real numbers $\alpha=(\alpha_{1},\ldots,\alpha_{ m})$ such that
\begin{gather}\label{def:weights}
 0\le \alpha_{1}< \cdots< \alpha_{m}<1.
 \end{gather}
These $\alpha$ are called \emph{weights} associated to the flag.

A \emph{generalised parabolic bundle} (from now on shortened as GPB) is a vector bundle $E$ together with parabolic structures over finitely many disjoint divisors $\{D_{j}\}_{j=1}^{n}$. We denote it by a~triple $(E,\cF(E),\alpha(E))$ where $\alpha(E)=\big(\alpha(E)^{1},\ldots, \alpha(E)^{n}\big)$ is the collection of vectors of weights corresponding to each divisor, i.e., $\alpha(E)^{j}=\big(\alpha^{j}_{1},\ldots, \alpha^{j}_{m_{j}}\big)$ for each~$j$. Note that we dropped $E$ from the notation for convenience.

\begin{Remark} We can recover the usual notion of parabolic bundle, as defined in \cite{mehta-seshadri:1980}, by considering generalised parabolic bundles where $D_{j}$ consist of a single point in~$X$ for all~$j$. In this sense, generalised parabolic bundles do generalise the usual notion of parabolic bundles.
\end{Remark}

Let $\ell^{j}_{i}=\dim F^{j}_{i-1}(E)/F^{j}_{i}(E)$ for $i=1, \ldots, m_{j}$. We define
\begin{gather*}
w_{D_{j}}(E)=\sum_{i=1}^{m_{j}}\ell^{j}_{i}\alpha^{j}_{i}, \qquad \textrm{and}\qquad w(E)=\sum_{j=1}^{n}w_{D_{j}}(E).
\end{gather*}

A \emph{homomorphism of GPBs} $f\colon (E, \cF(E),\alpha(E))\rightarrow (E', \cF(E'),\alpha(E'))$ on $X$ with disjoint set of divisors $\{D_{j}\}_{j=1}^{n}$ is a homomorphism of the underlying bundles $f\colon E\rightarrow E'$ such that
\begin{gather*}
f|_{D_{j}}(F^{j}_{i}(E))\subset F^{j}_{k}(E'), \qquad \textrm{whenever}\qquad \alpha^{j}_{k-1}(E')<\alpha^{j}_{i}(E)\le \alpha^{j}_{k}(E').
\end{gather*}

Note that, this definition particularises to the so called non-strongly parabolic homomorphisms in the case of parabolic bundles as in~\cite{seshadri:1982}.

Let $N\subset E$ be a subbundle and $(E,\cF(E),\alpha(E))$ a GPB on $X$ over $\{D_{j}\}_{j=1}^{n}$ then $N$ inherits the parabolic structure from $E$ in the following way.

$(N,\cF(N),\alpha(N))$ is a GPB where
\begin{gather*}
\cF(N)\colon \ F^{j}_{0}(N):=N|_{D_{j}}\supset F^{j}_{1}(N):=F^{j}_{1}(E)\cap N|_{D_{j}}\supset \cdots\supset F^{j}_{r}(N)=0
\end{gather*} and if $\beta_{k}$ is the weight associated to $F_{k}(N)$ then $\beta_{k}: =\alpha_{i}$, where $F^{j}_{i}(E)$ is the smallest subspace such that $F^{j}_{i}(E)\supset F^{j}_{k}(N)$.

By a \emph{subbdundle} of a GPB we mean a subbundle with the inherited parabolic structure.

Finally, we define the \emph{parabolic degree} and the \emph{parabolic slope} of $E$ as
\begin{gather}\label{def:pdeg}
\pdeg(E)=\deg(E)+w(E),\qquad \pmu(E)=\frac{\pdeg(E)}{\rk(E)}.
\end{gather}

A GPB $(E,\cF(E),\alpha(E))$ is \emph{semistable} (respectively \emph{stable}) if for every proper subbundle $N$
\begin{gather*}
\pmu(N)\le\,\mathrm{(resp.}<\mathrm{)} \pmu(E).
\end{gather*}

Once we have defined (semi)stability for generalised parabolic bundles we can study its moduli space.

\begin{Theorem}[{\cite[Theorem 1]{bhosle:1996}}] Let $X$ be an irreducible non-singular projective curve of genus $g$ $(g\ge 0)$ defined over an algebraically closed field. Let $\{D_{j}\}_{j=1}^{n}$ be finitely many disjoint effective divisors on $X$. The moduli space $M(r,d, \cF,\alpha)$ of semistable GPBs on $X$ of fixed rank~$r$, degree~$d$, flags of length~$k$ where~$k$ is independent of~$j$ $($i.e., $\cF(E)^{j}\colon F^{j}_{0}(E)=E|_{D_{j}}\supset F^{j}_{1}(E)\supset \cdots\supset F^{j}_{k}(E)=0$ for all $j=1, \ldots, n)$, and weights fixed and independent of~$j$ is a normal projective variety of dimension
\begin{gather*}
r^{2}(g-1)+1+\sum_{j}\dim\cF_{j},
\end{gather*}
where $\cF_{j}$ is the flag variety of flags of type $\cF(E)^{j}$. Moreover, the subset corresponding to stable GPBs is a non-singular open subvariety.
\end{Theorem}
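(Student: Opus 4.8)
\emph{Proof idea.} The plan is to realise $M(r,d,\cF,\alpha)$ as a GIT quotient, adapting the construction of Mehta--Seshadri~\cite{mehta-seshadri:1980} to the generalised parabolic setting (cf.\ \cite{bhosle:1992,bhosle:1996}). First one proves \emph{boundedness}: since all parabolic weights lie in $[0,1)$, a semistable GPB $(E,\cF(E),\alpha(E))$ satisfies $\mu(N)\le\mu(E)+C$ for every subbundle $N\subset E$, with $C$ depending only on the fixed numerical data, so the underlying bundles $E$ range over a bounded family; fix $m\gg 0$ so that $E(m)$ is globally generated with $H^{1}(X,E(m))=0$ for every such $E$, and set $N=\chi(E(m))$. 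Then each $E$ is a quotient $\cO_{X}(-m)^{\oplus N}\twoheadrightarrow E$, i.e.\ a point of a Quot scheme; let $R_{0}$ be the open subscheme where the quotient $E$ is locally free with the prescribed Hilbert polynomial, $H^{1}(X,E(m))=0$, and $k^{N}\to H^{0}(X,E(m))$ is an isomorphism. From $0\to K\to\cO_{X}(-m)^{N}\to E\to 0$ one gets $\Ext^{1}(K,E)=H^{1}(X,K^{\vee}\otimes E)=0$ using $H^{1}(E(m))=0$, so $R_{0}$ is smooth of dimension $\chi(K^{\vee}\otimes E)=N^{2}+r^{2}(g-1)$. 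Over $R_{0}$ form the fibre product $R=\mathrm{Fl}_{1}\times_{R_{0}}\cdots\times_{R_{0}}\mathrm{Fl}_{n}$ of the relative flag bundles recording the quasi-parabolic structures of the universal bundle along the $D_{j}$; then $R$ is smooth, $\dim R=N^{2}+r^{2}(g-1)+\sum_{j}\dim\cF_{j}$, and $R$ carries a natural $\GL(N,k)$-action (through change of basis of $k^{N}\cong H^{0}(E(m))$) whose centre acts trivially, so the action factors through $\PGL(N,k)$.

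Next one fixes the linearisation: on a suitable projective completion of $R$ there is a $\GL(N)$-linearised ample line bundle obtained by tensoring a determinant (Quot-scheme) polarisation with, for each $j$, tautological line bundles on $\mathrm{Fl}_{j}$ raised to powers determined by the weights $\alpha^{j}_{i}$ (rescaled to rational numbers), the coefficients being chosen precisely so that the numerical function of the Hilbert--Mumford criterion reproduces the parabolic slope. This numerical criterion is the heart of the argument. A one-parameter subgroup $\lambda$ of $\GL(N)$ induces a weighted filtration $0\subsetneq E_{1}\subsetneq\cdots\subsetneq E_{\ell}=E$ of the bundle (via the induced filtration of $H^{0}(E(m))=k^{N}$), and one computes that the Mumford weight $\mu(\lambda,x)$ at $x\in R$ is, up to a positive constant, a nonnegative linear combination of the numbers $\pdeg(E)\rk(E_{i})-\pdeg(E_{i})\rk(E)$, where each $E_{i}$ carries its induced generalised parabolic structure, with all coefficients positive when $\lambda$ is generic. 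Hence $x$ is GIT-semistable (resp.\ stable) for the $\PGL(N)$-action if and only if $\pmu(E_{i})\le\pmu(E)$ (resp.\ $<$) for all $i$, i.e.\ if and only if $(E,\cF(E),\alpha(E))$ is a semistable (resp.\ stable) GPB; one also checks that every GIT-semistable point of the completion already lies in $R$.

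Finally $M(r,d,\cF,\alpha):=R^{\mathrm{ss}}/\!\!/\GL(N)$ exists as a good quotient and is projective, being a good quotient of a $\GL(N)$-invariant open subset of a projective scheme; it is normal because $R$ is smooth, hence normal, and good quotients of normal varieties are normal. On the stable locus the quotient is geometric, the $\PGL(N)$-stabiliser of a stable point is trivial, and stability is open in families, so $M^{s}$ is an open subvariety with
\begin{gather*}
\dim M^{s}=\dim R-\dim\PGL(N)\\
=N^{2}+r^{2}(g-1)+\sum\nolimits_{j}\dim\cF_{j}-\big(N^{2}-1\big)=r^{2}(g-1)+1+\sum\nolimits_{j}\dim\cF_{j};
\end{gather*}
smoothness of $M^{s}$ follows because deformation theory identifies $T_{[E]}M^{s}$ with $\HH^{1}$ of the complex $\big[\,\End E\to\bigoplus_{j}\End(E|_{D_{j}})/\mathfrak{p}_{j}\,\big]$ whose obstruction space $\HH^{2}=H^{2}(X,\End E)$ vanishes on a curve, and this also recovers the dimension since $\HH^{0}$ is the scalars by stability, $\chi(\End E)=r^{2}(1-g)$, and the torsion term has length $\sum_{j}\dim\cF_{j}$. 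The main obstacle is the Hilbert--Mumford computation of the second paragraph: one must calibrate the polarisation so that its numerical criterion matches the parabolic slope inequality exactly, simultaneously controlling the Quot-scheme contribution and the contribution of each flag factor of a one-parameter subgroup; boundedness, smoothness of $R$, and the existence and projectivity of the quotient are comparatively routine.
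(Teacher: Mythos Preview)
The paper under review is a survey and does not prove this theorem at all: it merely states the result and refers the reader to \cite[Theorem~1]{bhosle:1996} for the proof. So there is no ``paper's own proof'' to compare against beyond that citation.

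That said, your sketch is the correct strategy and is essentially what Bhosle carries out in \cite{bhosle:1992,bhosle:1996}: bound the family of underlying bundles, realise everything over an open locus $R_{0}$ of a Quot scheme, pass to the relative flag bundle $R$ to encode the quasi-parabolic data, linearise with a polarisation whose coefficients are calibrated by the weights so that the Hilbert--Mumford numerical criterion reproduces the parabolic slope inequality, and take the GIT quotient. Your identification of the Hilbert--Mumford computation as the crux is accurate; boundedness, smoothness of $R$, normality of the quotient, and the dimension count are indeed routine once that is in place. One small point: in your smoothness argument for $R_{0}$ you should say explicitly that $K$ is locally free (it is, being the kernel of a surjection of locally free sheaves on a curve), since you use $\Ext^{1}(K,E)\cong H^{1}(K^{\vee}\otimes E)$; and the vanishing $H^{1}(K^{\vee}\otimes E)=0$ follows from the long exact sequence obtained by dualising $0\to K\to\cO(-m)^{N}\to E\to 0$ and tensoring with $E$, using $H^{1}(E(m))=0$ and $H^{2}=0$ on a curve, exactly as you indicate.
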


There are special cases in which the stability conditions of QPBs and GPBs relate. We shall fix some conditions in order to find out this relation.

A QPB $(E,\cF(E))$ it is said to have \emph{flags of length $1$} if
\begin{gather*}\cF(E)^{j}\colon \ E|_{D_{j}}=F^{j}_{0}(E)\supset F^{j}_{1}(E)\supset 0\qquad \mathrm{for\; all}\; j.\end{gather*}

Let $\alpha$ be a real number $0\le \alpha\le 1$. A QPB $(E,\cF(E))$ with flags of length $1$ is called $\alpha$-\emph{semistable} (respectively $\alpha$-\emph{stable}) if for any proper subbundle $N$ of $E$ with the induced ge\-ne\-ralised quasi parabolic structure, one has
\begin{gather*}
\frac{\deg(N)+\alpha \sum\limits_{j=1}^{n}\dim F^{j}_{1}(N)}{\rk(N)}\le \mathrm{(resp. <)} \frac{\deg(E)+\alpha \sum\limits_{i=1}^{n}\dim F^{j}_{1}(E)}{\rk(E)}.
\end{gather*}

\begin{Remark}\label{rmk:stab}Note that for $0\le \alpha<1$ $\alpha$-semistability (or $\alpha$-stability) for a QPB $(E,\cF(E))$ with flags of length $1$, is the same as semistability (or stability) for the GPB $(E,\cF(E),\alpha(E))$ with $\alpha^{j}(E)=(0,\alpha)$ for all $j$.
\end{Remark}

The following proposition sheds light on the necessary conditions for a moduli space of $\alpha$-(semi)stable QPBs (or, by Remark \ref{rmk:stab} for (semi)stable GPBs) to be smooth.

\begin{Proposition}[{\cite[Proposition 3.3]{bhosle:1996}}]\label{prop:stab}
Let $(E,\cF(E))$ be a QPB of rank $r$ with flags of length~$1$ and let $a=\sum\dim F^{j}_{1}(E)$. Then
\begin{enumerate}\itemsep=0pt
\item[$1.$] Suppose that $1- 1/(a(r-1))<\alpha<1$. Then if $(E,\cF(E))$ is $\alpha$-semistable, it is also $1$-semistable. If it is $1$-stable then it is also $\alpha$-stable.
\item[$2.$] Suppose that $(r,d)=1$ and $a$ is an integral multiple of~$r$. Then $(E,\cF(E))$ is $1$-stable if and only if it is $1$-semistable.
\item[$3.$] If all conditions above are satisfied then $\alpha$-stability is equivalent to $\alpha$-semistability and the moduli space $M(r,d,\cF,\alpha)$ of GPBs is non-singular.
\end{enumerate}
\end{Proposition}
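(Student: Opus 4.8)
The plan is to prove the three statements in order, each one reducing $\alpha$-(semi)stability questions to the extreme value $\alpha=1$, where the numerics become rational and one can exploit the coprimality hypothesis.

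For part 1, I would argue by contradiction on the contrapositive. Suppose $(E,\cF(E))$ is $\alpha$-semistable but not $1$-semistable, so there is a proper subbundle $N$ of rank $s$, $1\le s\le r-1$, destabilising at weight $1$; write $b=\sum_j\dim F^j_1(N)\le a$ and $a'=\sum_j\dim F^j_1(E)$. The failure of $1$-semistability reads $\frac{\deg(N)+b}{s}>\frac{\deg(E)+a'}{r}$, while $\alpha$-semistability gives $\frac{\deg(N)+\alpha b}{s}\le\frac{\deg(E)+\alpha a'}{r}$. Subtracting, I get $\frac{(1-\alpha)b}{s}>\frac{(1-\alpha)a'}{r}+\big(\tfrac{\deg(E)+a'}{r}-\tfrac{\deg(N)+b}{s}\big)$ is not quite the right manipulation — instead I would estimate the integer $rb-sa'$ directly: from the two inequalities one deduces $r(\deg N)+rb>s(\deg E)+sa'$ so $r(\deg N)-s(\deg E)>sa'-rb$, and from the $\alpha$-inequality $r(\deg N)-s(\deg E)\le \alpha(sa'-rb)$. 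If $sa'-rb\le 0$ these combine to force a contradiction with $\alpha<1$ when the left side is positive; the delicate case is $sa'-rb>0$, where I get $0<sa'-rb<\alpha(sa'-rb)/1$ — so the point is to bound $sa'-rb$ from below by something forcing $\alpha\le 1-1/(a(r-1))$. Since $b\ge 0$ and $a'\le a$ and $s\le r-1$, one has $sa'-rb\le sa'\le (r-1)a$, and being a nonzero integer it is $\ge 1$; then $r(\deg N)-s(\deg E)$ is a positive integer, hence $\ge 1$, while the $\alpha$-inequality gives $1\le \alpha(sa'-rb)\le \alpha(r-1)a$, i.e.\ $\alpha\ge 1/((r-1)a)$ — that is the wrong direction, so I must instead run the argument the other way: assume $1$-unstable and derive $\alpha$-unstable to contradict $\alpha$-semistability, which gives the clean inequality $\alpha>1-1/(a(r-1))$ as exactly the threshold making the destabilising $N$ also $\alpha$-destabilising. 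The same bookkeeping with strict inequalities handles the $1$-stable $\Rightarrow$ $\alpha$-stable claim.

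For part 2, with $(r,d)=1$ and $a\equiv 0\pmod r$, I would show a strictly semistable object is impossible at $\alpha=1$: if $N\subset E$ has $\frac{\deg N+\sum\dim F^j_1(N)}{s}=\frac{d+a}{r}$ with $0<s<r$, then $r(\deg N+\sum\dim F^j_1(N))=s(d+a)$; since $r\mid a$, reducing shows $r\mid sd$, but $(r,d)=1$ forces $r\mid s$, impossible for $0<s<r$. Hence no proper subbundle achieves equality, so $1$-semistable implies $1$-stable, and the converse is trivial.

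For part 3, I would assemble: under the hypotheses of 1 and 2, part 1 sandwiches $\alpha$-semistability between $1$-semistability implications and part 2 collapses $1$-semistable with $1$-stable, while part 1 also gives $1$-stable $\Rightarrow$ $\alpha$-stable; chasing the implications $\alpha\text{-ss}\Rightarrow 1\text{-ss}\Leftrightarrow 1\text{-s}\Rightarrow\alpha\text{-s}\Rightarrow\alpha\text{-ss}$ shows all four coincide. Smoothness of $M(r,d,\cF,\alpha)$ then follows from the last sentence of the moduli theorem quoted above (Bhosle, Theorem~1): the locus of stable GPBs is a nonsingular open subvariety, and here that locus is everything, so via Remark~\ref{rmk:stab} the whole moduli space of $\alpha$-(semi)stable QPBs is smooth. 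The main obstacle is getting the inequality bookkeeping in part 1 exactly right — tracking which of $sa'-rb$ and $r\deg N-s\deg E$ is the relevant nonzero integer, and verifying that the stated threshold $1-1/(a(r-1))$ is precisely what is needed rather than an off-by-one variant; everything else is routine arithmetic or a direct citation.
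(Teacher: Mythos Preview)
The paper itself does not prove this proposition: it is a survey, and as the introduction states, most results are quoted without proof and the reader is referred to the original source (here \cite[Proposition~3.3]{bhosle:1996}). So there is no in-paper argument to compare against.

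That said, your outline is the standard one and is essentially correct. For part~1 the computation you are groping for is cleanest if you set $s=\rk N$, $b=\sum_j\dim F^j_1(N)$, $m=r\deg N-s\deg E$, $k=sa-rb$, so that $1$-instability reads $m>k$ (hence $m\ge k+1$) and $\alpha$-semistability reads $m\le\alpha k$; the case $k\ge 0$ is immediately contradictory, while for $k<0$ one gets $(1-\alpha)(-k)\ge 1$ and then bounds $-k=rb-sa\le (r-1)a$, forcing $\alpha\le 1-1/(a(r-1))$. The dual implication ($1$-stable $\Rightarrow$ $\alpha$-stable) is the same with the roles of strict and non-strict reversed, and the nontrivial case is then $k>0$, bounded above by $(r-1)a$ for the same reason. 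Your part~2 divisibility argument and the part~3 chain of implications are exactly right, with smoothness coming from the stable locus being everything.

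The only real issue is expository: your part~1 reads as a live scratchpad (``not quite the right manipulation'', ``that is the wrong direction''), and you never actually nail down which sign case carries the content. In a final write-up, separate the two implications, name the two integers $m$ and $k$ at the outset, and dispatch the trivial sign case in one line before doing the bound $|k|\le (r-1)a$ in the remaining case.
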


Therefore, we get to the following theorem.

\begin{Theorem}[{\cite[Theorem 2]{bhosle:1996}}] Let $M(r,d,\cF,\alpha)$ denote the moduli space of stable GPBs of rank $r$ and degree $d$ with flags of length $1$ on an irreducible non-singular curve $X$ with genus $g$. Denote $a_{j}=\dim F^{j}_{1}(E)$ for all $j$, and $a=\sum\limits_{j=1}^{n}a_{j}$.

If $\alpha(E)=\big(\alpha^{1}(E),\ldots,\alpha^{n}(E)\big)$ is such that $\alpha^{j}(E)=(0,\alpha)$, for all~$j$, $\alpha$ satisfies $1-1/(a(r-1))<\alpha<1$, $(r,d)=1$, and $a$ is an integral multiple of $r$, then $M(r,d,\cF, \alpha)$ is a~fine moduli space.
\end{Theorem}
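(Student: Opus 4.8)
The plan is to realise $M=M(r,d,\cF,\alpha)$ as a geometric quotient and then to descend the tautological family. Following the construction of this moduli space in~\cite{bhosle:1996}, one fixes $m\gg 0$, puts $N=d+r(m+1-g)=\chi(E(m))$, and forms the open subscheme $\cR$ of the relative flag scheme over the Quot scheme $\Quot(\cO_{X}(-m)^{\oplus N},P)$ parametrising pairs consisting of a quotient $q\colon \cO_{X}(-m)^{\oplus N}\too E$ for which $H^{0}(q(m))$ is an isomorphism and $E$ underlies an $\alpha$-semistable GPB of the prescribed numerical type, together with a choice of $a_{j}$-dimensional subspace $F^{j}_{1}\subset E|_{D_{j}}$ for each~$j$. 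The group $\GL(N)$ acts on $\cR$, its centre $\GG_{m}$ acting trivially, and $M=\cR^{ss}/\!\!/\PGL(N)$.

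First I would invoke Proposition~\ref{prop:stab}: since $1-1/(a(r-1))<\alpha<1$, $(r,d)=1$ and $a$ is an integral multiple of~$r$, all three of its hypotheses hold, so $\alpha$-semistability and $\alpha$-stability coincide. Hence $\cR^{ss}=\cR^{s}$, the quotient $\cR^{s}\too M$ is a geometric quotient, and $M$ is smooth. Next, a stable GPB is simple: any nonzero GPB-endomorphism $\phi$ of~$E$ has, on comparing $\pmu$ of its kernel and image with $\pmu(E)$, zero kernel and full image, hence is an automorphism, so $\End$ of a stable GPB is the ground field. Therefore $\PGL(N)$ acts with trivial stabilisers on $\cR^{s}$, and $\cR^{s}\too M$ is a principal $\PGL(N)$-bundle.

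On $\cR^{s}\times X$ there is the universal quotient bundle $\widetilde{E}$, together with the universal flag subbundles $\widetilde{F}^{j}_{1}\subset \widetilde{E}|_{\cR^{s}\times D_{j}}$; writing $\pi\colon \cR^{s}\times X\to\cR^{s}$ for the projection, this data is $\GL(N)$-equivariant with the central $\GG_{m}$ acting by weight~$1$. It therefore descends along $\cR^{s}\to M$ exactly once this central action is trivialised, i.e.\ once one twists by $\pi^{\ast}\cL$ for a line bundle $\cL$ on $\cR^{s}$ carrying a $\GG_{m}$-action of weight~$-1$. Such an~$\cL$ exists because $\GCD(r,d+r(1-g))=\GCD(r,d)=1$: the line bundle $\det(\widetilde{E}|_{\{x_{0}\}\times\cR^{s}})$ has weight~$r$, the determinant of cohomology $\det(R\pi_{\ast}\widetilde{E})$ has weight $\chi(E)=d+r(1-g)$, and a B\'ezout combination of their duals supplies weight~$-1$. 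Then $\widetilde{E}\otimes\pi^{\ast}\cL$, with its flag subbundles, is $\PGL(N)$-equivariant and descends to a GPB $(\overline{E},\cF(\overline{E}),\alpha)$ on $M\times X$; the classifying morphism $T\to M$ attached to any family of stable GPBs over $T\times X$ — obtained from the associated bundle of framings via the universal properties of the Quot scheme and of the quotient — pulls this family back to the given one, so it is universal and $M(r,d,\cF,\alpha)$ is fine.

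The main obstacle is the descent step, and specifically the construction of the weight~$-1$ line bundle: this is the only place where the coprimality $(r,d)=1$ is genuinely needed for fineness — the hypothesis that $a$ be a multiple of~$r$ enters only through Proposition~\ref{prop:stab}, to force $\alpha$-semistability~$=$~$\alpha$-stability — and one must check with some care that $\det R\pi_{\ast}\widetilde{E}$ is a well-defined line bundle on all of~$\cR^{s}$ whose formation commutes with base change, so that the fibrewise weight computation is legitimate.
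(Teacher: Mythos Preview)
The paper is a survey and does not actually prove this theorem: after stating Proposition~\ref{prop:stab} it simply writes ``Therefore, we get to the following theorem'' and quotes the result from \cite[Theorem~2]{bhosle:1996}, so there is no argument here to compare against beyond the implicit pointer to Proposition~\ref{prop:stab} for the equality of $\alpha$-semistability and $\alpha$-stability.

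Your proposal is the standard route to fineness for moduli of decorated sheaves, and it is sound. You correctly isolate the two ingredients: (i)~Proposition~\ref{prop:stab} forces $\cR^{ss}=\cR^{s}$ and hence a geometric quotient by a free $\PGL(N)$-action (simplicity of stable GPBs giving trivial stabilisers), and (ii)~descent of the tautological family after twisting by a weight~$-1$ line bundle, produced from $\det(\widetilde{E}|_{x_{0}})$ (weight~$r$) and $\det R\pi_{\ast}\widetilde{E}$ (weight $\chi(E)=d+r(1-g)$) via B\'ezout, using $\GCD(r,d+r(1-g))=\GCD(r,d)=1$. This is exactly the mechanism behind the analogous fineness results for ordinary and parabolic bundles, and is almost certainly what \cite{bhosle:1996} does as well; your observation that the hypothesis $r\mid a$ is used only to feed Proposition~\ref{prop:stab}, while $(r,d)=1$ is what drives the descent, is accurate. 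The caution you flag about $\det R\pi_{\ast}\widetilde{E}$ is well placed but routine: the Knudsen--Mumford determinant of a perfect complex handles it, and base-change compatibility is part of that package.
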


In particular, in the case of line bundles on $X$ and considering $D$ a reduced effective divisor of degree 2, it is possible to get an explicit description of the moduli space.

\begin{Proposition}[{\cite[Proposition 2.2]{bhosle:1992}}]\label{prop:lineGPB} Let $D=x+z$. The moduli space $M(1,d,\cF,\alpha)$ of GPBs of rank $1$ and degree $d$ with flags of length~$1$ on~$X$ is a $\PP^{1}$-bundle over $J^{d}(X)$, the Jacobian of line bundles of degree~$d$ on~$X$.
\end{Proposition}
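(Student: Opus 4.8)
The plan is to describe a generalised parabolic line bundle of rank $1$ with flag of length $1$ over $D = x + z$ explicitly, and then identify the moduli datum with a point of $J^d(X)$ together with a line in a $2$-dimensional vector space. First I would observe that for a line bundle $L$ of degree $d$ on $X$, the fibre $L|_D = L\otimes\cO_D$ is a $2$-dimensional $k$-vector space, canonically $L_x \oplus L_z$ since $x$ and $z$ are distinct reduced points. A quasi-parabolic structure of flag length $1$ on $L$ over $D$ is then a single proper nonzero subspace $F_1 \subset L|_D$, i.e.\ a $1$-dimensional subspace, since for a rank $1$ bundle there are no proper subbundles and the subspace $F_1(L) = 0$ forces $a_j = 1$; the weight data $\alpha^j(L) = (0,\alpha)$ is fixed. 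Hence the set of GPBs of rank $1$, degree $d$, flags of length $1$ with fixed weight $\alpha$ is set-theoretically the set of pairs $(L, \ell)$ with $L \in J^d(X)$ and $\ell$ a line in the $2$-dimensional space $L|_D \cong \PP(L_x \oplus L_z)$, which is a $\PP^1$.

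Next I would promote this set-theoretic description to a description of the moduli space as a variety, and in particular check the $\PP^1$-bundle structure. The key point is that the assignment $L \mapsto L|_D$ globalises: over $J^d(X)$ one has a Poincaré line bundle $\cP$ (after possibly tensoring with the pullback of a line bundle on $J^d(X)$, which does not affect the projectivization), and the rank $2$ bundle $\cV := (\mathrm{pr}_{J})_* (\cP|_{X\times\{x,z\}})$, equivalently $\cP_x \oplus \cP_z$ where $\cP_x, \cP_z$ are the restrictions of $\cP$ to $\{x\}\times J^d(X)$ and $\{z\}\times J^d(X)$. Then $\PP(\cV) \to J^d(X)$ is a $\PP^1$-bundle, and I would argue that it corepresents the moduli functor: a family of such GPBs over a base $S$ is a line bundle on $X\times S$ (of relative degree $d$) together with a line subbundle of its restriction to $(\{x\}\cup\{z\})\times S$, which by the universal property of the Poincaré bundle and of projectivization is exactly a morphism $S \to \PP(\cV)$. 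Since all such line bundles are stable as GPBs when $0\le\alpha<1$ (a rank $1$ object has no proper subbundles, so the stability inequality is vacuous), the moduli space is this $\PP^1$-bundle; one may also invoke the dimension formula from Bhosle's Theorem~1, which gives $1^2(g-1) + 1 + \dim \cF = g-1+1+1 = g+1 = \dim J^d(X) + 1$, consistent with a $\PP^1$-bundle over $J^d(X)$.

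The main obstacle, such as it is, is the bookkeeping around the Poincaré bundle: strictly speaking a universal Poincaré line bundle on $X\times J^d(X)$ exists only after a choice (it is unique up to twist by a line bundle pulled back from $J^d(X)$), but since we only need its restrictions to the two slices $\{x\}$ and $\{z\}$ and then projectivize the resulting rank $2$ bundle, the ambiguity is harmless — twisting $\cP$ by $\mathrm{pr}_J^* M$ replaces $\cV$ by $\cV\otimes M$, which gives an isomorphic $\PP^1$-bundle. A secondary point to be careful about is that one should exhibit the actual isomorphism of moduli functors rather than merely a bijection on points; this is where one uses that $D$ is reduced of degree $2$ so that $\cO_D \cong \cO_x \oplus \cO_z$ and the flag variety of a length $1$ flag in a $2$-dimensional space is literally $\PP^1$, and that for rank one the induced parabolic structure on the (nonexistent proper) subbundles plays no role, so GPB-stability imposes nothing. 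Putting these together yields $M(1,d,\cF,\alpha) \cong \PP(\cP_x \oplus \cP_z) \to J^d(X)$, a $\PP^1$-bundle, as claimed.
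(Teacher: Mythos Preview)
Your argument is correct, but note that the present paper does not actually prove this proposition: it is a survey and merely cites \cite[Proposition~2.2]{bhosle:1992} for the result, so there is no in-paper proof to compare against. Your approach---identify a rank-$1$ GPB with length-$1$ flag over $D=x+z$ as a pair $(L,\ell)$ with $L\in J^d(X)$ and $\ell\in\PP(L_x\oplus L_z)$, then globalise using a Poincar\'e line bundle to realise the moduli space as $\PP(\cP_x\oplus\cP_z)\to J^d(X)$, noting that the twist ambiguity in $\cP$ disappears upon projectivisation and that stability is vacuous in rank one---is exactly the natural one and is essentially Bhosle's original argument. One cosmetic remark: the clause ``the subspace $F_1(L)=0$ forces $a_j=1$'' is garbled; what you need (and implicitly use) is simply that the fixed flag type $\cF$ in the moduli problem specifies $\dim F_1=1$, so that the flag datum is a point of $\PP^1$.
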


\subsection{Fixed determinant GPBs}

Let $\Lambda^{r}$ be the $r$-exterior product of vector spaces, then for any two vector spaces, $E_{1}$ and $E_{2}$, of the same dimension $r$, the direct sum of top exterior products, $\Lambda^{r}(E_{1})\oplus \Lambda^{r}(E_{2})$, is a direct summand of the $r$-exterior product of their direct sum, $\Lambda^{r}(E_{1}\oplus E_{2})$.

There is a canonical projection map
\begin{gather*}
q\colon \ \Lambda^{r}(E_{1}\oplus E_{2})\rightarrow \Lambda^{r}(E_{1})\oplus \Lambda^{r}(E_{2}),
\end{gather*}
such that for a subspace $V\subset E_{1}\oplus E_{2}$ one gets that $q(V)\subset \Lambda^{r}E_{1}\oplus \Lambda^{r} E_{2}$. The operator $\Lambda^{r}$ and the map $q$ easily extend to vector bundles.

Assume that $D_{j}=x_{j}+z_{j}$ for all $j$. We define the top exterior product $\Lambda^{r}(E,\cF(E))$ of a QPB $(E,\cF(E))$ with flags of length~$1$, to be a rank $1$ QPB $(\Lambda^{r}(E), \cF(\Lambda^{r}(E))$ where $\cF(\Lambda^{r}(E))^{j}\colon F^{j}_{0}(\Lambda^{r}(E))\supset q\big(F^{j}_{1}(E)\big)\supset 0$.

For a GPB $(E,\cF(E),\alpha(E))$ we define
\begin{gather*}
\Lambda^{r}(E,\cF(E),\alpha(E)):=\big(\Lambda^{r}(E),\cF(\Lambda^{r}(E)),\alpha(E)\big).
\end{gather*}

Let $p^{j}_{1}\colon F^{j}_{1}(E)\rightarrow E_{x_{j}}$ and $p^{j}_{2}\colon F^{j}_{1}(E)\rightarrow E_{z_{j}}$ be the natural projections. It is possible to globalize the previous construction to the level of moduli spaces, if an only if at least one $p^{j}_{i}$ for each $j$ is an isomorphism. We call $M'$ the subspace of the moduli space $M(r,d,\cF,\alpha)$ satisfying this condition. The determinant map, provided by the extension of the top exterior product to bundles, is well defined on $M'$ (see \cite[Proposition~3.7]{bhosle:1996}). Therefore, we denote by $M_{L}(r,d,\cF,\alpha)$ the closure in $M(r,d,\cF,\alpha)$ of the space of GPBs with rank $r$ degree $d$ and flags of length $1$ such that $\Lambda^{r}(E,\cF(E),\alpha(E))=(L, \cF(L),\alpha(L))$ which we refer to as \emph{the moduli space of stable GPBs of rank $r$ degree $d$ flags of length~$1$ and fixed determinant~$L$}. These considerations lead Bhosle to the following result.

\begin{Theorem}[{\cite[Theorem~3]{bhosle:1996}}]\label{thm:fixdet} Let $(L, \cF(L),\alpha(L))$ be a GPB of rank $1$ and degree $d$ on $X$ with a~set of disjoint divisors $\{D_{j}\}_{j=1}^{n}$. Assume that the projections $p^{j}_{1}\colon F^{j}_{1}(L)\rightarrow L_{x_{j}}$ and $p^{j}_{2}\colon F^{j}_{2}(L)\rightarrow L_{z_{j}}$ are both different from zero for all $j$. Then the moduli space $M_{L}(r,d,\cF,\alpha)$ of GPBs of rank~$r$ degree~$d$ flags of length~$1$ and fixed determinant, i.e., $\Lambda^{r}(E,\cF(E),\alpha(E))=(L, \cF(L),\alpha(L))$, is a normal variety.

Moreover if $(r,d)=1$ and $1-1/(rn(r-1))<\alpha<1$, then $M_{L}(r,d,\cF,\alpha)$ is non-singular.
\end{Theorem}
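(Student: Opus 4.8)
The plan is to deduce Theorem~\ref{thm:fixdet} from the properties of the full moduli space $M(r,d,\cF,\alpha)$ established in the earlier theorems together with a study of the determinant morphism. First I would make precise the determinant morphism: by the discussion preceding the statement, the hypothesis that $p^{j}_{1}$ and $p^{j}_{2}$ are both nonzero for each $j$ (equivalently, at least one of them is an isomorphism in the rank-one situation, since a nonzero map between one-dimensional spaces is an isomorphism) guarantees that we are working over the subspace $M'\subset M(r,d,\cF,\alpha)$ on which $\Lambda^{r}(E,\cF(E),\alpha(E))$ is a well-defined rank-one GPB, following \cite[Proposition~3.7]{bhosle:1996}. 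Thus one obtains a morphism $\det\colon M'\to M(1,d,\cF,\alpha(L))$ sending $(E,\cF(E),\alpha(E))$ to $\Lambda^{r}(E,\cF(E),\alpha(E))$, and $M_{L}(r,d,\cF,\alpha)$ is by definition the closure of the fiber $\det^{-1}([(L,\cF(L),\alpha(L))])$ in $M(r,d,\cF,\alpha)$.

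Next I would establish normality. Since $M(r,d,\cF,\alpha)$ is a normal projective variety (first theorem of Bhosle quoted above), and $M_{L}$ is a closed subvariety, normality of $M_{L}$ does not follow formally; the standard route is to exhibit $M_{L}$ locally as a quotient and invoke the fact that GIT quotients of normal (indeed smooth, at the level of the parameter space of bundles with parabolic structure) varieties are normal. Concretely, one realizes $M(r,d,\cF,\alpha)$ as a GIT quotient $R^{ss}/\!\!/G$ where $R$ is a smooth parameter scheme (a suitable Quot-scheme bundle carrying the flag data) and $G$ a reductive group; the determinant condition cuts out a smooth $G$-invariant locally closed subscheme $R_{L}\subset R$ (smoothness because fixing the determinant is a transverse condition once the $p^{j}_{i}$ are nonzero, so the exterior-product map is a submersion onto the relevant parameter space for $(L,\cF(L),\alpha(L))$), and $M_{L}$ is then the GIT quotient of the closure of $R_{L}^{ss}$. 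Normality of $M_{L}$ follows since the quotient of a normal variety by a reductive group is normal. This is essentially the argument of \cite[Section~3]{bhosle:1996}, specialized; I would quote the construction of $R$ and $G$ from there rather than rebuild it.

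For the smoothness statement under $(r,d)=1$ and $1-1/(rn(r-1))<\alpha<1$, the key input is Proposition~\ref{prop:stab}. With $a=\sum_{j}a_{j}$ and $a_{j}=\dim F^{j}_{1}(E)=r-1$ (fixed determinant forces the flag on the top exterior power, hence $\dim F^{j}_{1}(E)$, to be $r-1$ for the determinant line to be preserved — this is the content of the $q$-construction: $q(F^{j}_{1}(E))$ is one-dimensional exactly when $F^{j}_{1}(E)$ has codimension $1$), one has $a=n(r-1)$, so the bound $1-1/(a(r-1))<\alpha<1$ of Proposition~\ref{prop:stab} becomes precisely $1-1/(rn(r-1))<\alpha<1$ after noting $a(r-1)=n(r-1)^2$... — here I must be careful: the stated bound in the theorem is $1-1/(rn(r-1))$, and since $rn(r-1)\ge n(r-1)^2$ the hypothesis of the theorem implies the hypothesis of Proposition~\ref{prop:stab}, so $\alpha$-stability equals $\alpha$-semistability on $M(r,d,\cF,\alpha)$, hence on $M_{L}$. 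Then $M_{L}$ is an open subvariety of the normal variety on which the GIT quotient is geometric and free (by coprimality $(r,d)=1$ there are no strictly semistable points and stabilizers are trivial modulo the center), so $M_{L}=R_{L}^{s}/G$ with $R_{L}^{s}$ smooth, giving smoothness of $M_{L}$.

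The main obstacle I anticipate is the transversality/smoothness of the fixed-determinant locus $R_{L}$ inside $R$: one must check that the exterior-power map respects the parabolic flag data smoothly, and this is exactly where the hypothesis "$p^{j}_{1}$ and $p^{j}_{2}$ both nonzero" is used — it ensures $q$ does not degenerate on any flag, so that fixing $(L,\cF(L),\alpha(L))$ imposes the expected number of independent conditions. Verifying this deformation-theoretically (computing that the differential of $\det$ is surjective onto the tangent space of the rank-one moduli at $[L]$, with the parabolic corrections) is the technical heart; everything else is bookkeeping with the quoted GIT construction and Proposition~\ref{prop:stab}.
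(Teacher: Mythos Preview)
The paper under review is a survey and does not give a proof of this theorem at all: it merely records the statement as \cite[Theorem~3]{bhosle:1996} after the sentence ``These considerations lead Bhosle to the following result.'' So there is nothing in the paper to compare your argument against; your sketch is in the spirit of the original GIT construction in \cite{bhosle:1996}, which is the correct reference for the actual proof.

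That said, there is a concrete error in your sketch that you should fix. You claim that fixing the determinant forces $a_{j}=\dim F^{j}_{1}(E)=r-1$, and then try to reconcile the resulting bound $1-1/(n(r-1)^{2})$ with the stated $1-1/(rn(r-1))$. This is backwards. The dimension $a_{j}$ is part of the fixed discrete data of the moduli problem, not something imposed by the determinant condition; in the situation at hand (and in all the applications in the paper, cf.\ Theorem~\ref{thm:desin}) one has $a_{j}=r$. With $a_{j}=r$ and $F^{j}_{1}(E)$ the graph of an isomorphism $E_{x_{j}}\to E_{z_{j}}$, the projection $q\big(\Lambda^{r}F^{j}_{1}(E)\big)\subset \Lambda^{r}E_{x_{j}}\oplus\Lambda^{r}E_{z_{j}}$ is a line with both components nonzero, which is exactly what produces the rank-one GPB $(L,\cF(L))$ with the hypothesis $p^{j}_{1},p^{j}_{2}\neq 0$. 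Then $a=\sum_{j}a_{j}=rn$, so $a(r-1)=rn(r-1)$ and the bound in Proposition~\ref{prop:stab} is \emph{exactly} $1-1/(rn(r-1))<\alpha<1$, with no inequality juggling needed. Moreover $a=rn$ is visibly a multiple of $r$, so condition~(2) of Proposition~\ref{prop:stab} holds; with your value $a=n(r-1)$ this would fail in general, breaking the equivalence of $\alpha$-stability and $\alpha$-semistability that you rely on.

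Once $a_{j}=r$ is corrected, your outline (determinant morphism on $M'$, normality via GIT quotient of a smooth parameter scheme, smoothness from the absence of strictly semistable points under the numerical hypotheses) is the standard route and matches what one finds in \cite{bhosle:1996}.
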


\section{GPBs and torsion free sheaves on nodal curves}\label{sec:GPBnodal}
The notion of a generalised parabolic bundle has been broadly applied to the theory of torsion free sheaves on singular curves. Bhosle applied these structures in \cite{bhosle:1992,bhosle:1996} to the study of torsion free sheaves on a curve with nodes and cusps as singularities. Just for convenience, along this article, we shall focus only in the case of nodes.

Let $Y$ be an integral (i.e., irreducible and reduced) projective curve over an algebraically closed field with only nodal singularities. From now on we call it \emph{nodal curve}. Let $y_{j}\in Y$, $j=1, \ldots, n$, be the nodal points on $Y$ and
\begin{gather*}
p\colon \ X\longrightarrow Y
\end{gather*}
the normalisation map.

The appropriate QPBs to consider in this situation are $(E,\cF(E))$ of rank $r$ degree $d$ and generalised parabolic structures on the disjoint divisors given by the inverse images $ p^{-1}(y_{j})=x_{j}+z_{j}$, with flags of length~$1$, i.e., for each $j$ we have
\begin{itemize}\itemsep=0pt
\item a divisor $D_{j}=x_{j}+z_{j}$,
\item and a flag $\cF(E)^{j}\colon E_{x_{j}}\oplus E_{z_{j}}=F^{j}_{0}(E)\supset F^{j}_{1}(E)\supset 0$.
\end{itemize}

To each $(E,\cF(E))$ we associate a torsion free sheaf $V$ on $Y$ of rank $r$ and degree $d$ defined by
\begin{gather}\label{eq:fV}
0\longrightarrow V\longrightarrow p_{\ast} (E)\longrightarrow \bigoplus_{j=1}^{n} p_{\ast}(E)\otimes k(y_{j})/F^{j}_{1}(E)\longrightarrow 0,
\end{gather}
where \looseness=1 $k(y_{j})$ is the residue field of the local ring $\cO_{y_{j}}$ at each node (the reader may want to see \cite{bhosle-logares-newstead, cook,seshadri:1982} for further details on torsion free sheaves on nodal curves, QPBs and GPBs). Moreover, the following proposition give us conditions on $(E,\cF(E))$ to produce a vector bundle~$V$ on~$Y$.

\begin{Proposition}[{\cite[Proposition 4.3]{bhosle:1992}}]\label{prop:lf} Let $p_{x_{j}}\colon F^{j}_{1}(E)\rightarrow E_{x_{j}}$ and $p_{z_{j}}\colon F^{j}_{1}(E)\rightarrow E_{z_{j}}$ be the canonical projections. Then
\begin{enumerate}\itemsep=0pt
\item[$1)$] if $p_{x_{j}}$ and $p_{z_{j}}$ are isomorphisms, then $V$ is locally free,
\item[$2)$] if only one projection is an isomorphism and the other one has rank $k$, then
\begin{gather*}V_{y_{j}}\cong k\cO_{y_{j}}\oplus (r-k)\frak{m}_{y_{j}},\end{gather*}
 where $\frak{m}_{j}$ is the maximal ideal of the local ring $\cO_{y_{j}}$, and
 \item[$3)$] if $F_{1}^{j}(E)=M_{x_{j}}\oplus M_{z_{j}}$ such that $M_{x_{j}}\subset k(x_{j})$ and $M_{z_{j}}\subset k(z_{j})$ then $V_{y_{j}}\cong r \frak{m}_{y_{j}}$.
\end{enumerate}
\end{Proposition}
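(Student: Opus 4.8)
The statement is local at each node $y_j$, so the plan is to work in the completed local ring and analyse the push-forward $p_*(E)$ together with the quotient appearing in~\eqref{eq:fV} one node at a time. Fix $j$ and write $A=\widehat{\cO}_{y_j}\cong k[[u,v]]/(uv)$ for the completed local ring of $Y$ at the node, with normalisation $\widetilde{A}\cong k[[u]]\times k[[v]]$ corresponding to the two branches through $x_j$ and $z_j$. Then $p_*(E)$ is locally (after completion) the free $\widetilde{A}$-module $\widetilde{A}^{\,r}$, viewed as an $A$-module, and $V_{y_j}$ is the kernel of the composite $\widetilde{A}^{\,r}\to (\widetilde{A}\otimes k(y_j))^{\,r}= k^{\,r}\oplus k^{\,r}=E_{x_j}\oplus E_{z_j}\to (E_{x_j}\oplus E_{z_j})/F^j_1(E)$. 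So $V_{y_j}$ is the $A$-submodule of $\widetilde{A}^{\,r}$ consisting of those sections whose value at the node lies in the subspace $F^j_1(E)\subset E_{x_j}\oplus E_{z_j}$.

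First I would dispose of case~(1): if both projections $p_{x_j}$, $p_{z_j}$ are isomorphisms, then $F^j_1(E)\subset E_{x_j}\oplus E_{z_j}$ is the graph of a linear isomorphism $\varphi\colon E_{x_j}\xrightarrow{\sim} E_{z_j}$. Choosing a basis of $E_{x_j}$ and its image under $\varphi$ as a basis of $E_{z_j}$, the gluing condition ``value on branch~$1$ equals value on branch~$2$'' is exactly the defining relation for $A\subset\widetilde{A}$ in each coordinate, so $V_{y_j}\cong A^{\,r}$ is free; this is the content of part~(1). For part~(3), the hypothesis $F^j_1(E)=M_{x_j}\oplus M_{z_j}$ is the opposite extreme: since $r=\dim M_{x_j}+\dim M_{z_j}$ and each $M$ sits in one branch only, genericity forces $M_{x_j}=0=M_{z_j}$, i.e.\ $F^j_1(E)=0$, so $V_{y_j}$ is the set of sections vanishing at the node, which is $\lm_{y_j}\widetilde{A}^{\,r}\cap\widetilde{A}^{\,r}$; one checks directly that as an $A$-module this is $\lm_{y_j}^{\,r}=r\lm_{y_j}$. (Here one uses that $\lm_{y_j}=(u,v)A$ maps onto $(u)k[[u]]\oplus(v)k[[v]]$ inside $\widetilde{A}$, with the two branch-components independent.)

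The substantive case is~(2). Suppose $p_{x_j}$ is an isomorphism and $p_{z_j}$ has rank~$k$. Then $F^j_1(E)$ is the graph of a linear map $\psi\colon E_{x_j}\to E_{z_j}$ of rank~$k$. Decompose $E_{x_j}=k^{\,k}\oplus k^{\,r-k}$ so that $\psi$ is injective on the first summand and zero on the second, and correspondingly decompose $E_{z_j}$ so that $\psi$ lands isomorphically onto the first $k^{\,k}$. In this adapted basis the submodule $V_{y_j}\subset\widetilde{A}^{\,r}=k[[u]]^{\,r}\times k[[v]]^{\,r}$ splits as a direct sum of $r$ rank-one pieces: $k$ of them carry the condition ``constant term on branch~$1$ equals constant term on branch~$2$,'' each giving a copy of $A$; and $r-k$ of them carry the condition ``constant term on branch~$1$ is arbitrary, constant term on branch~$2$ is~$0$,'' each giving the $A$-module $\{(f,g)\in k[[u]]\times k[[v]] : g(0)=0\}$. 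The final step is to identify this last module: it equals $A+ (0)\times(v)k[[v]]$, and I claim it is isomorphic to $\lm_{y_j}=(u,v)A$ as an $A$-module. Indeed $\lm_{y_j}\subset\widetilde{A}$ is exactly $(u)k[[u]]\times(v)k[[v]]$, and multiplication by the element $u+1\in\widetilde{A}$ (a unit on branch~$1$, value~$1$ on branch~$2$) carries $\lm_{y_j}$ isomorphically onto $(u)k[[u]]\times(v)k[[v]]\cdot 1$... more cleanly: the map $\lm_{y_j}\to\{(f,g):g(0)=0\}$ sending $(uf_0,\,vg_0)\mapsto(f_0+uf_0,\, vg_0)$ is an $A$-linear isomorphism, so each such piece is $\cong\lm_{y_j}$. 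Summing, $V_{y_j}\cong A^{\,k}\oplus\lm_{y_j}^{\,r-k}=k\cO_{y_j}\oplus(r-k)\lm_{y_j}$, as asserted.

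The main obstacle is the bookkeeping in case~(2): one must verify that the decomposition of $E_{x_j}$ adapted to $\psi$ really does induce a direct-sum decomposition of the $A$-module $V_{y_j}$ (not merely of the ambient $\widetilde{A}^{\,r}$), and that the rank-one ``mixed'' piece is genuinely isomorphic to $\lm_{y_j}$ rather than to something only abstractly of the same length. Both points are elementary once one is careful that $A$-module structure only sees constant terms at the node, but they are where the argument has to be written out rather than asserted. Everything else follows from unwinding the defining sequence~\eqref{eq:fV} over $\widehat{\cO}_{y_j}$ and using that $p$ is an isomorphism away from the nodes, so that the sheaf $V$ is automatically locally free of rank~$r$ at every non-nodal point and the stated local models completely describe its singularities.
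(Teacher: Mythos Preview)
The paper under review is a survey and does not supply its own proof of this proposition; it simply quotes the statement from \cite[Proposition~4.3]{bhosle:1992}. So there is no in-paper argument to compare against, and I will assess your proposal on its own terms. Your overall strategy---pass to the completed local ring $A=k[[u,v]]/(uv)$ at the node, realise $V_{y_j}$ as the $A$-submodule of $\widetilde A^{\,r}$ consisting of sections whose value at the node lies in $F_1^j(E)$, and then split into rank-one pieces---is the natural one and is essentially how the result is proved in the original source. Parts~(1) and~(2) are handled correctly: the graph description of $F_1^j(E)$ and the identification $\{(f,g)\in k[[u]]\times k[[v]]:g(0)=0\}\cong\frak m_{y_j}$ are fine (your map, once cleaned up, is just $(uf_0,vg_0)\mapsto(f_0,vg_0)$, which is visibly $A$-linear and bijective).

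There is, however, a genuine error in your treatment of part~(3). You write that ``genericity forces $M_{x_j}=0=M_{z_j}$, i.e.\ $F_1^j(E)=0$''. This is self-contradictory: you have just recorded that $\dim M_{x_j}+\dim M_{z_j}=r$, so the two summands cannot both vanish (and nothing in the hypothesis is generic---case~(3) is precisely the most \emph{special} position of $F_1^j(E)$). The hypothesis says only that $F_1^j(E)$ \emph{splits} as $M_{x_j}\oplus M_{z_j}$ with $M_{x_j}\subset E_{x_j}$ and $M_{z_j}\subset E_{z_j}$; the individual dimensions $a=\dim M_{x_j}$ and $b=\dim M_{z_j}=r-a$ are unconstrained. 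Consequently $V_{y_j}$ is not the module of sections vanishing at the node, but rather $\{(f,g):f(0)\in M_{x_j},\ g(0)\in M_{z_j}\}$.

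To repair the argument, choose bases so that $M_{x_j}$ is spanned by the first $a$ coordinate vectors of $E_{x_j}$ and $M_{z_j}$ by the first $b$ of $E_{z_j}$, and pair the $i$-th basis vector of $E_{x_j}$ with the $(r{+}1{-}i)$-th of $E_{z_j}$. Since $a+b=r$, every resulting rank-one piece carries exactly one vanishing condition (on one branch only), hence is isomorphic to $\frak m_{y_j}$ by the very computation you already carried out in part~(2); summing gives $V_{y_j}\cong r\,\frak m_{y_j}$. Alternatively, one may observe once and for all that at a node $\frak m_{y_j}\cong\widetilde A$ as $A$-modules via $(uf,vg)\mapsto(f,g)$, so \emph{any} rank-one piece with independent branchwise conditions is isomorphic to $\frak m_{y_j}$, and the conclusion follows regardless of how one pairs the bases.
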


Moreover, stability and semistability of $(E,\cF(E))$ and $V$ relate in the following way.

\begin{Proposition}[{\cite[Proposition 4.2]{bhosle:1992}}]Let $V$ be a torsion free sheaf associated to a~QPB $(E,\cF(E))$ with flags of length~$1$ for all~$j$, then $V$ is $($semi$)$stable if and only if $(E,\cF(E))$ is $1$-$($semi$)$stable.
\end{Proposition}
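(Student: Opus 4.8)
The plan is to transport (semi)stability along the correspondence $(E,\cF(E))\mapsto V$ by matching subobjects on the two sides. First recall that this correspondence preserves rank and degree: applying $\chi(-)$ to the defining sequence~\eqref{eq:fV}, using that $p$ is finite so $\chi(p_\ast E)=\chi(E)$, together with the fact that the arithmetic genus of $Y$ is $g+n$, one finds $\rk V=r$ and $\deg V=d$; the latter in particular forces $\sum_{j}\dim F^{j}_{1}(E)=rn$. Consequently $\mu(V)=d/r$, while the $1$-slope of $(E,\cF(E))$ is
\begin{gather*}
\mu_{1}(E)=\frac{\deg E+\sum_{j}\dim F^{j}_{1}(E)}{r}=\frac{d}{r}+n,
\end{gather*}
so that $\mu(V)=\mu_{1}(E)-n$.

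Next I would set up a dictionary between \emph{saturated} subsheaves of $V$ (subsheaves $W\subset V$ with $V/W$ torsion free) and subbundles $N\subset E$ endowed with the induced quasi-parabolic structure $F^{j}_{1}(N)=F^{j}_{1}(E)\cap(N_{x_{j}}\oplus N_{z_{j}})$. Starting from such an $N$ of rank $r'$, let $W_{N}\subset p_\ast N$ be the torsion free sheaf associated to $(N,\cF(N))$ by the recipe~\eqref{eq:fV}; since $F^{j}_{1}(N)\subset F^{j}_{1}(E)$, the inclusion $W_{N}\subset p_\ast N\subset p_\ast E$ kills each $p_\ast E\otimes k(y_{j})/F^{j}_{1}(E)$, hence factors through $V$, and the same Euler-characteristic bookkeeping (carried out locally at the nodes) gives
\begin{gather*}
\deg W_{N}=\deg N+\sum_{j}\dim F^{j}_{1}(N)-nr',\qquad\textrm{so}\qquad \mu(W_{N})=\mu_{1}(N)-n.
\end{gather*}
Conversely, from a saturated $W\subset V$ of rank $r'$ form $p^\ast W$ modulo torsion, map it to $E$ via the counit $p^\ast p_\ast E\to E$, and let $N$ be its saturation in $E$, a subbundle of rank $r'$ carrying an induced structure; adjunction gives $W\subset p_\ast N$, and because $W\subset V$ lands in $F^{j}_{1}(E)$ at every node one checks fibrewise that $W$ maps into $F^{j}_{1}(N)$, i.e.\ $W\subset W_{N}$. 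Since $W$ and $W_{N}$ have the same rank and $V/W$ is torsion free, $W_{N}/W$ is a torsion subsheaf of $V/W$, hence zero: $W=W_{N}$. Thus the two constructions are mutually inverse on saturated subsheaves.

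With the dictionary in hand the proposition becomes slope bookkeeping. By the displayed degree formula, $\mu(W_{N})\le\mu(V)$ if and only if $\mu_{1}(N)\le\mu_{1}(E)$, and likewise for strict inequalities. Hence, if $(E,\cF(E))$ is $1$-(semi)stable then $\mu(W_{N})\le\mu(V)$ for every proper subbundle $N$ (strictly in the stable case, as $W_{N}$ is then a proper subsheaf of $V$), so every saturated $W\subset V$ satisfies $\mu(W)=\mu(W_{N})\le\mu(V)$; since the saturation of an arbitrary subsheaf only increases the slope, it suffices to test saturated subsheaves, and $V$ is (semi)stable. Conversely, if $V$ is (semi)stable then applying the slope bound to the proper subsheaf $W_{N}$ yields $\mu_{1}(N)\le\mu_{1}(E)$ (strictly in the stable case) for every proper subbundle $N$, i.e.\ $(E,\cF(E))$ is $1$-(semi)stable.

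The real work is concentrated in the dictionary, specifically in showing that $W_{N}$ is genuinely a subsheaf of $V$ and that the backward construction returns it; I expect this to be the main obstacle. It is exactly here that the local geometry at the nodes intervenes: by Proposition~\ref{prop:lf}, when $p_{x_{j}}$ or $p_{z_{j}}$ fails to be an isomorphism $V$ is not locally free at $y_{j}$, so ``subbundle of $V$'' must be read as ``subsheaf with torsion free quotient'', the inclusion $W\subset W_{N}$ has to be verified on the level of fibres at each $y_{j}$ (checking that the image of $W_{y_{j}}$ lies in $F^{j}_{1}(E)\cap(N_{x_{j}}\oplus N_{z_{j}})$), and the degree computation for $W_{N}$ must likewise be localised at the nodes. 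Once these compatibilities are established, the remainder is the formal comparison of slopes above.
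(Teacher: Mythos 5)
Your argument is correct and follows essentially the same route as the cited source (the survey itself does not reproduce a proof of this proposition but defers to Bhosle's Proposition~4.2, whose proof is precisely this dictionary between induced sub-QPBs $N\subset E$ and saturated subsheaves $W_{N}\subset V$, combined with the uniform slope shift $\mu(W_{N})=\mu_{1}(N)-n$). One cosmetic remark: $\sum_{j}\dim F^{j}_{1}(E)=rn$ is part of the setup (it is what makes $\deg V=d$) rather than something ``forced'' by your computation, but your slope bookkeeping does not actually use it, since the shift by $n$ holds for every sub-QPB regardless.
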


Fix a vector $k=(k_{1},\ldots, k_{n})$, where $0\le k_{j}\le r$ for all $j=1,\ldots, n$, and define the spaces
\begin{gather*}
M_{k}=\{(E,\cF(E))\,|\,a_{j}+b_{j}=r-k_{j}\},
\end{gather*}
 where $a_{j}$ and $b_{j}$ are the dimensions of the kernels of the projections $p_{x_{j}}$ and $p_{z_{j}}$ defined in Proposition~\ref{prop:lf}. Also, let
 \begin{gather*}
 U_{k}=\{V\,|\,V_{y_{j}}\cong k_{j}\cO_{y_{j}}\oplus (r-k_{j})\frak{m}_{j},\, j=1,\ldots,r\},
 \end{gather*}
 where $\cO_{y_{j}}$ is the local ring at the node $y_{j}$ and $\frak{m}_{j}$ its maximal ideal.

Let $M$ be the set of isomorphism classes of QPBs $(E,\cF(E))$ with rank $r$ degree $d$, and $U$ be the set of isomorphism classes of torsion-free sheaves of rank $r$ and degree $d$ on $Y$. We get stratifications
\begin{gather}\label{eq:strat}
M=\sqcup_{k}M_{k} \qquad \mathrm{and} \qquad U=\sqcup_{k}U_{k}.
\end{gather}

By \cite[Proposition~4.7]{bhosle:1996}, the map
\begin{gather*}
f\colon \ M_{k}\longrightarrow U_{k}
\end{gather*}
defined as $f(E,\cF(E))=V$, as in (\ref{eq:fV}), is such that when restricted to $M_{(r,\ldots, r)}$ is a bijection onto $U_{(r,\ldots, r)}$. Moreover, $f$ sends (semi) stable objects to (semi) stable objects.

The following theorem was first proved for one node in \cite[Theorem~3]{bhosle:1992} and in more generality later in~\cite{bhosle:1996}.

\begin{Theorem}[{\cite[Theorem 4]{bhosle:1996}}]\label{thm:desin} Let $Y$ be a nodal curve and let $p\colon X\rightarrow Y$ be its normalisation. Let~$U(r,d)$ be the moduli space of semistable torsion free sheaves on $Y$ of rank $r$ and degree $d$ and let $M(r,d, \cF,\alpha)$ be the moduli space of semistable GPBs on~$X$, with divisors $D_{j}=p^{-1}(y_{j})=x_{j}+z_{j}$ for all $j$ and flags of length $1$ such that $\dim F^{j}_{1}(E)=r$. Fix $\alpha^{j}=(0,\alpha)$ such that $1-1/(rn(r-1))<\alpha<1$. Then,
\begin{enumerate}\itemsep=0pt
\item[$1)$] $f$ induces a surjective morphism on the moduli spaces
\begin{gather*}f\colon \ M(r,d,\cF,\alpha)\longrightarrow U(r,d),
\end{gather*}
\item[$2)$] when restricted to the stable locus $f\colon M^{s}_{(r,\ldots, r)}\longrightarrow U^{s}_{(r,\ldots,r)}$ is birational,
\item[$3)$] if $(r,d)=1$, then $M(r,d,\cF,\alpha)$ is a desingularisation of $U(r,d)$.
\end{enumerate}
\end{Theorem}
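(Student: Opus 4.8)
The plan is to establish the three assertions of Theorem~\ref{thm:desin} by bootstrapping from the strata-wise analysis of the map $f$ recalled just above, together with the properness and smoothness results already quoted (Theorems on $M(r,d,\cF,\alpha)$ and Proposition~\ref{prop:stab}). The starting point is the set-theoretic map $f\colon M_k\to U_k$ of~(\ref{eq:fV}); one first checks that it is actually a morphism of schemes on each stratum — this is essentially a base-change/flatness statement for the exact sequence~(\ref{eq:fV}) in families, and it is where I would invoke \cite[Proposition~4.7]{bhosle:1996} — and that it respects (semi)stability, so that it descends to the GIT quotients and glues across strata to a morphism $f\colon M(r,d,\cF,\alpha)\to U(r,d)$.

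For part~(1), surjectivity, I would argue as follows. Given a semistable torsion free sheaf $V$ on $Y$, its pullback data determine a QPB: locally at each node $y_j$, $V_{y_j}\cong k_j\cO_{y_j}\oplus(r-k_j)\mathfrak{m}_{y_j}$ for some $k_j$, and reversing the construction~(\ref{eq:fV}) one produces a bundle $E$ on $X$ with a length-one flag $F^j_1(E)$ of the appropriate dimension $r$ — the passage from the local description in Proposition~\ref{prop:lf} to a global $(E,\cF(E))$ is the substantive point, and one uses that $p_*$ is exact (affine morphism) to recover $p_*(E)$ and hence $E$ from $V$. By the preceding Proposition, $V$ semistable forces $(E,\cF(E))$ to be $1$-semistable; since $1-1/(rn(r-1))<\alpha<1$, part~(1) of Proposition~\ref{prop:stab} (with $a=rn$, which is indeed an integral multiple of $r$) gives that $1$-semistability implies the needed relation with $\alpha$-semistability, so $(E,\cF(E),\alpha)$ defines a point of $M(r,d,\cF,\alpha)$ mapping to $[V]$. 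Thus $f$ is surjective on closed points, and being a morphism of projective varieties (properness of the moduli spaces), its image is closed; combined with dominance this yields surjectivity.

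For part~(2), I would restrict attention to the open dense strata $M^s_{(r,\dots,r)}$ and $U^s_{(r,\dots,r)}$. On $M_{(r,\dots,r)}$ the hypothesis $\dim F^j_1(E)=r$ with both projections $p_{x_j},p_{z_j}$ isomorphisms means, by part~(1) of Proposition~\ref{prop:lf}, that $V=f(E,\cF(E))$ is locally free, i.e.\ an honest vector bundle on $Y$; conversely a stable bundle on $Y$ pulls back to such a QPB, and the two constructions are mutually inverse. The remark recalled before the theorem that $f$ restricted to $M_{(r,\dots,r)}$ is a \emph{bijection} onto $U_{(r,\dots,r)}$, upgraded to the stable loci and checked to be an isomorphism of schemes (injectivity of $df$, or Zariski's main theorem given normality of $U(r,d)$), gives that $f$ is birational. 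Since both $M(r,d,\cF,\alpha)$ and $U(r,d)$ are irreducible of the same dimension $r^2(g-1)+1$ (the flag contribution $\sum_j\dim\cF_j$ vanishes when the flag is forced of full type, and for $V$ a bundle on $Y$ one has $\dim U(r,d)=r^2(g_Y-1)+1=r^2(g-1)+1$ since $g_Y=g+n$ minus the $n$ nodes... — I would pin down the arithmetic genus count carefully here), birationality is what one expects.

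Finally, part~(3): when $(r,d)=1$, part~(2) of Proposition~\ref{prop:stab} gives that $1$-stability and $1$-semistability coincide for our QPBs (here $a=rn$ is an integral multiple of $r$), and then part~(3) of that proposition, together with the choice of $\alpha$, yields that $M(r,d,\cF,\alpha)$ is \emph{non-singular}; it is also projective (first quoted theorem) and irreducible. Combined with parts~(1) and~(2) — $f$ proper, surjective, birational, from a smooth variety onto the (normal but in general singular) variety $U(r,d)$, and an isomorphism over the smooth open stable locus — this is precisely the statement that $M(r,d,\cF,\alpha)$ is a desingularisation of $U(r,d)$. The main obstacle I anticipate is not any single deep step but the careful bookkeeping in part~(2): verifying that the stratum-wise bijection on $(r,\dots,r)$ is genuinely an isomorphism of schemes (so that ``birational'' is legitimate rather than merely a bijection on points), which requires controlling the scheme structure of $f$ near the boundary strata and checking that the exceptional locus of $f$ has codimension $\ge 1$; this rests on the flatness statements in \cite[Propositions~4.3 and~4.7]{bhosle:1992,bhosle:1996}, which I would cite rather than reprove.
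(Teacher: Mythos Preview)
First, note that the paper itself gives \emph{no} proof of this theorem: it is a survey, the statement is simply quoted as \cite[Theorem~4]{bhosle:1996}, and the reader is referred to Bhosle's original paper. So there is no ``paper's own proof'' to compare your outline against; what I can do is flag the places where your outline would not go through as written.

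There is a genuine gap in your surjectivity argument for part~(1). You lift a semistable $V$ to a QPB $(E,\cF(E))$, observe that it is $1$-semistable, and then invoke part~(1) of Proposition~\ref{prop:stab} to get $\alpha$-semistability. But that proposition gives the implications in the \emph{opposite} direction: $\alpha$-semistable $\Rightarrow$ $1$-semistable, and $1$-stable $\Rightarrow$ $\alpha$-stable. It does \emph{not} say $1$-semistable $\Rightarrow$ $\alpha$-semistable, and in fact for $\alpha<1$ that implication can fail for strictly $1$-semistable objects. So your pointwise lifting argument does not place the lifted QPB in $M(r,d,\cF,\alpha)$ in general. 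The clean fix is the one you half-state at the end of the paragraph: establish part~(2) first (the restriction of $f$ to the dense open $M^s_{(r,\dots,r)}$ is an isomorphism onto the dense open $U^s_{(r,\dots,r)}$), conclude that $f$ is dominant, and then use projectivity of $M(r,d,\cF,\alpha)$ to get that the image is closed, hence all of $U(r,d)$. Your attempt to prove surjectivity before birationality, by lifting an arbitrary semistable $V$, is the step that breaks.

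Separately, your dimension count in part~(2) is miscomputed, though the conclusion is correct. The flag contribution $\sum_j\dim\cF_j$ does \emph{not} vanish: the flag $E|_{D_j}\supset F^j_1(E)\supset 0$ with $\dim E|_{D_j}=2r$ and $\dim F^j_1(E)=r$ is parametrised by $\Gr(r,2r)$, of dimension $r^2$, so $\dim M(r,d,\cF,\alpha)=r^2(g-1)+1+nr^2$. On the other side, the arithmetic genus of $Y$ is $g_Y=g+n$ (not ``$g+n$ minus the $n$ nodes''), so $\dim U(r,d)=r^2(g_Y-1)+1=r^2(g-1)+1+nr^2$. The two match precisely \emph{because} the Grassmannian dimensions account for the jump in arithmetic genus, not because either side is $r^2(g-1)+1$.
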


\begin{Remark}\label{rmk:line}In fact, Theorem \ref{thm:desin} implies that, when the rank is one, the moduli space $M(1,d,\cF,\alpha)$ in Proposition~\ref{prop:lineGPB} is isomorphic to the canonical desingularisation of $\overline{J}(Y)$, the compactified Jacobian of line bundles of degree $d$ on $Y$ (see \cite[Theorem~2]{bhosle:1992}).
\end{Remark}

The following theorem gives us an application of the moduli space of GPBs to the desingularisation of the moduli space of torsion free sheaves with fixed determinant.

\begin{Theorem}[{\cite[Theorem 5]{bhosle:1996}}]Let $Y$ be a nodal curve and~$L$ a fixed line bundle on~$Y$. Let $U_{(r,\ldots,r)}$ be as in \eqref{eq:strat} and $U^{L}_{(r,\ldots,r)}\subset U_{(r,\ldots,r)}$ be the closed subset corresponding to bundles with fixed determinant~$L$. Let $U(r,L)$ be the closure of $U^{L}_{(r,\ldots,r)}$ in~$U(r,d)$, the moduli space of semistable torsion free sheaves on~$Y$ or rank~$r$ degree $d=\deg(L)$. Let $M_{\overline{L}}(r,d,\cF,\alpha) $ be the moduli space of semistable GPBs of rank $r$ degree $d$ flags of length $1$ and fixed determinant $\overline{L}=f^{-1}(L)$ provided with the appropriate flags and weights as in Theorem~{\rm \ref{thm:fixdet}}. Then~$f$ induces a birational surjective morphism $M_{\overline{L}}(r,d,\cF,\alpha) \rightarrow U(r,L)$ and when $(r,d)=1$ $M_{\overline{L}}(r,d,\cF,\alpha)$ is a desingularisation of~$U(r,L)$.
\end{Theorem}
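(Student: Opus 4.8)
The plan is to follow the template already established in Theorem~\ref{thm:desin} and adapt each of its three conclusions to the fixed-determinant setting, since the fixed-determinant moduli space is constructed as a fibre (or closure of a fibre) of the determinant map over the level sets of $f$. First I would recall from Theorem~\ref{thm:fixdet} that $M_{\overline{L}}(r,d,\cF,\alpha)$ is already known to be a normal variety, and non-singular when $(r,d)=1$ and $1-1/(rn(r-1))<\alpha<1$; so the only genuinely new content is the existence and properties of the induced map to $U(r,L)$. The strategy is to restrict the surjective morphism $f\colon M(r,d,\cF,\alpha)\to U(r,d)$ of Theorem~\ref{thm:desin} to the closed subvariety $M_{\overline{L}}(r,d,\cF,\alpha)$ and check that its image is exactly $U(r,L)$.

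The key steps, in order, would be as follows. \emph{Step 1: Compatibility of determinants.} Show that $f$ intertwines the determinant map on GPBs (the top exterior product $\Lambda^{r}$, defined on $M'$ as in the excerpt) with the usual determinant of torsion free sheaves on $Y$; concretely, for $(E,\cF(E))$ with all $\dim F^{j}_{1}(E)=r$ and the projections $p^{j}_{i}$ nonzero, one has $\det f(E,\cF(E)) = f(\Lambda^{r}(E,\cF(E),\alpha(E)))$ as torsion free (in fact line) sheaves on $Y$. This follows from the exact sequence~(\ref{eq:fV}) applied in rank $r$ and in rank $1$, together with the compatibility of $p_{*}$ and the canonical projection $q$ with exterior powers, and it is essentially the statement underlying \cite[Proposition~3.7]{bhosle:1996}. \emph{Step 2: Image identification.} Using Step 1, deduce that $f$ maps $M_{(r,\dots,r)}$ with fixed determinant $\overline{L}$ bijectively (by the bijection $M_{(r,\dots,r)}^{s}\to U_{(r,\dots,r)}^{s}$ from Theorem~\ref{thm:desin}(2) and the stratification~(\ref{eq:strat})) onto $U^{L}_{(r,\dots,r)}$, hence that $f(M_{\overline{L}})$, being closed (properness of $f$, which is projective) and containing a dense subset of $U(r,L)$, equals $U(r,L)$. \emph{Step 3: Birationality.} Restrict the birational morphism $f\colon M^{s}_{(r,\dots,r)}\to U^{s}_{(r,\dots,r)}$ to the fixed-determinant loci; since over the top stratum $f$ is already a bijection onto the locus of locally free sheaves with the correct local type, its restriction to the fibre over $\det=\overline{L}$ (resp.\ $\det=L$) is a bijection on the open dense stable locally free locus, giving birationality of $M_{\overline{L}}(r,d,\cF,\alpha)\to U(r,L)$. \emph{Step 4: Desingularisation.} When $(r,d)=1$, invoke Theorem~\ref{thm:fixdet} for smoothness of $M_{\overline{L}}(r,d,\cF,\alpha)$, note $U(r,L)$ is projective, and conclude that the birational projective morphism from a smooth variety onto $U(r,L)$ is a desingularisation, exactly as in the proof of Theorem~\ref{thm:desin}(3).

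The main obstacle I anticipate is \emph{Step 1--2}, namely making precise the interaction between the fixed-determinant condition on the GPB side (which lives only on the locus $M'$ where at least one projection $p^{j}_{i}$ is an isomorphism, and is then defined via a closure) and the fixed-determinant condition $\det V\cong L$ on the torsion free sheaf side. One must check that $f^{-1}(L)$, interpreted as a GPB determinant $\overline{L}$, is well-defined and that taking closures commutes appropriately with $f$ — i.e.\ that $f(\overline{M_{(r,\dots,r)}\cap\{\det=\overline{L}\}}) = \overline{f(M_{(r,\dots,r)}\cap\{\det=\overline{L}\}})$, which uses that $f$ is a closed (proper) morphism so images of closed sets are closed and dense images of dense sets. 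The surjectivity and birationality then follow formally from Theorem~\ref{thm:desin}, so once the determinant-compatibility bookkeeping of Step~1 is in place the rest is routine; accordingly I would not belabour the calculations there but cite \cite[Proposition~3.7, Theorem~4]{bhosle:1996} and Theorem~\ref{thm:fixdet} above.
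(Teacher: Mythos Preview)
The paper under review is a survey and does not supply its own proof of this theorem: it merely states the result and refers the reader to \cite[Theorem~5]{bhosle:1996} for the argument. So there is nothing in the present paper to compare your proposal against directly.

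That said, your outline is the natural one and almost certainly tracks the original proof in~\cite{bhosle:1996}: restrict the morphism $f$ of Theorem~\ref{thm:desin} to the fixed-determinant locus, use the compatibility of $\Lambda^{r}$ with $f$ (which is exactly the content of \cite[Proposition~3.7]{bhosle:1996}), invoke properness of $f$ to match closures, and then appeal to Theorem~\ref{thm:fixdet} for smoothness when $(r,d)=1$. Your identification of the only delicate point --- that the determinant on the GPB side is defined only on the open set $M'$ and one must check that taking closures commutes with $f$ --- is correct, and your remedy (properness of $f$) is the right one. There is no gap in your plan.
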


\section{GPBs and representations of the fundamental group}\label{sec:repr}

Let $X$ be a compact Riemann surface. Given $\rho\colon \pi_{1}(X)\rightarrow \GL(r,\CC)$, a representation of the fundamental group of $X$ into the general linear group, one may construct a vector bundle on $X$ associated to~$\rho$, $E_{\rho}$, as
\begin{gather*}
E_{\rho}=\big(\widetilde{X}\times\CC^{r}\big)/\pi_{1}(X)\longrightarrow X,
\end{gather*}
where $\widetilde{X}$ denotes the universal cover of $X$ and $\pi_{1}(X)$ acts on $\CC^{r}$ via the representation~$\rho$. Note that the bundle $E_{\rho}$ is a flat bundle by construction, thus $\deg(E_{\rho})=0$.

Weil proved in \cite{weil:1938} that a vector bundle~$E$ on~$X$ comes from a representation~$\rho$ if and only if~$E$ is a direct sum of indecomposable bundles of degree~$0$. Moreover, Narasimhan and Seshadri, in their celebrated work~\cite{narasimhan-seshadri:1965}, proved that a holomorphic vector bundle over an irreducible non-singular curve of genus $g\ge 2$, is polystable of degree~$0$ if and only if it comes from a unitary representation.

A curve $Y$ with $n$ nodes may be seen, homotopically, as obtained from its normalisation~$X$ by attaching a handle to each pair of points, $x_{j}$, $z_{j}$ in $D_{j}\subset X$, given by the inverse image of each node $y_{j}$ in~$Y$ under the normalisation map. Hence, the fundamental group of~$Y$ satisfies
\begin{gather*}
\pi_{1}(Y)\cong \pi_{1}(X)\ast \ZZ\ast \cdots \ast\ZZ,
\end{gather*}
where $\ast$ denotes the $n$-fold free product $\ZZ$ (see \cite[Result~1.7]{bhosle:1995}).

Bhosle proved the following relation between GPBs and representations.

\begin{Theorem}[{\cite[Theorem 1]{bhosle:1995}}]\label{thm:repGPB} Let $(E,\cF(E))$ be a QPB of rank $r$ degree $0$ and length $1$ flag such that $\dim F_{1}(E)=r$. We also assume that all projections $p_{x_{j}}\colon F^{j}_{1}(E)\rightarrow E_{x_{j}}$ and $p_{z_{j}}\colon F^{j}_{1}(E)\rightarrow E_{z_{j}}$ are isomorphisms for all $j$. Then $(E,\cF(E))$ is associated to a representation $\rho\colon \pi_{1}(Y)\rightarrow \GL(r,\CC)$ if and only if $E$ is a direct sum of indecomposable bundles of degree zero. Moreover, if the restriction of $\rho$ to $\pi_{1}(X)$ is unitary, then $E$ is polystable.
\end{Theorem}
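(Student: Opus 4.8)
The plan is to translate Weil's theorem on the normalisation $X$ into a statement on $Y$ by bundling together the handle data encoded in the flag. First I would recall the concrete dictionary: a QPB $(E,\cF(E))$ of the type in the hypothesis — flags of length $1$ over each $D_j = x_j + z_j$ with $\dim F_1^j(E)=r$ and both projections $p_{x_j}, p_{z_j}$ isomorphisms — is exactly the data of a vector bundle $E$ on $X$ together with, for each node $y_j$, a gluing isomorphism $\sigma_j \colon E_{x_j} \xrightarrow{\sim} E_{z_j}$, namely $\sigma_j = p_{z_j}\circ p_{x_j}^{-1}$. Since $p^{-1}(y_j) = x_j + z_j$ and $Y$ is obtained from $X$ by identifying $x_j$ with $z_j$, descent for coherent sheaves says this is precisely the data of a vector bundle $V$ on $Y$ with $p^{*}V \cong E$; concretely $V$ is the kernel in \eqref{eq:fV} and by Proposition~\ref{prop:lf}(1) it is locally free. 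So the QPB category here is equivalent to the category of vector bundles on $Y$.

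Next I would set up the representation side. Using $\pi_1(Y) \cong \pi_1(X) * \ZZ * \cdots * \ZZ$ (the $n$-fold free product of copies of $\ZZ$ with $\pi_1(X)$), a representation $\rho\colon \pi_1(Y)\to\GL(r,\CC)$ is the same as a representation $\rho_0\colon\pi_1(X)\to\GL(r,\CC)$ together with $n$ arbitrary elements $g_1,\dots,g_n\in\GL(r,\CC)$ (the images of the free generators corresponding to the handles). The bundle $V_\rho$ on $Y$ built from $\rho$ restricts on $X$ to $E_{\rho_0}$, and the extra generators $g_j$ supply exactly the gluing isomorphisms over the nodes up to the flat trivialisation near $x_j, z_j$. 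Thus "$(E,\cF(E))$ is associated to a representation of $\pi_1(Y)$" unwinds, via the equivalence above, to "$V$ comes from a representation of $\pi_1(Y)$", which by the free-product structure is equivalent to "$p^{*}V \cong E$ comes from a representation of $\pi_1(X)$" (the handle generators $g_j$ being completely free, they impose no constraint beyond what the gluing already encodes). Now invoke Weil's theorem \cite{weil:1938} on the non-singular curve $X$: $E$ comes from a representation of $\pi_1(X)$ if and only if $E$ is a direct sum of indecomposable bundles of degree $0$. Since $\deg E = \deg V = 0$ under our hypotheses, this gives the stated equivalence.

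For the final clause, suppose $\rho|_{\pi_1(X)} = \rho_0$ is unitary. Then $E = E_{\rho_0}$ is the flat unitary bundle attached to $\rho_0$, and by Narasimhan--Seshadri \cite{narasimhan-seshadri:1965} a unitary flat bundle of degree $0$ is polystable of degree $0$ on $X$. One must be a little careful about whether "polystable" is meant for $E$ as a bundle on $X$ (which is immediate) or for the associated GPB/QPB: but with all $p_{x_j}, p_{z_j}$ isomorphisms and weights irrelevant to the ordinary stability of the underlying bundle, the parabolic slope of $E$ equals its ordinary slope up to the fixed constant $w(E)$, and subbundles inherit the same shift, so $1$-polystability of the QPB follows from polystability of $E$. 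Hence $E$ is polystable, as claimed.

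The main obstacle I expect is making the second paragraph genuinely rigorous rather than heuristic: namely, nailing down precisely the correspondence between the handle generators $g_j$ of $\pi_1(Y)$ and the gluing isomorphisms $\sigma_j$, so that "no constraint beyond the gluing" is actually a theorem and not a plausibility argument. This amounts to a careful analysis of how a flat bundle on the wedge-like homotopy model of $Y$ decomposes — one has to choose basepoints, trivialise over contractible neighbourhoods of the $x_j$ and $z_j$, and check that monodromy around the $j$-th handle is exactly the transition function $\sigma_j$ read in those trivialisations, with $\rho_0$ unaffected. Once that bookkeeping is in place, everything else reduces to citing Weil and Narasimhan--Seshadri on $X$. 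A secondary subtlety worth a sentence is confirming $\deg E = 0$: this follows because $V$ has degree $0$ and $p_*$ (equivalently the defining sequence \eqref{eq:fV} with $r$-dimensional quotients at each node) matches $\deg V = \deg E - \sum_j \dim\big(p_*(E)\otimes k(y_j)/F_1^j(E)\big)$, and with $\dim F_1^j(E)=r$ the correction vanishes.
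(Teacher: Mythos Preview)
Your proposal is correct and follows essentially the same route as the paper's proof: decompose $\pi_1(Y)\cong\pi_1(X)\ast\ZZ\ast\cdots\ast\ZZ$, apply Weil's theorem on $X$ for the equivalence, match the handle generators $g_j$ with the gluing maps $\sigma_j=p_{z_j}\circ p_{x_j}^{-1}$ (the paper phrases this by defining $F_1(E)$ as the graph of $\sigma$ in one direction and setting $\rho(1)=p_z\circ p_x^{-1}$ in the other), and invoke Narasimhan--Seshadri for the unitary clause. The paper's sketch works directly with the QPB data rather than detouring through the descended bundle $V$ on $Y$, but this is a cosmetic difference; your identification of the ``main obstacle'' is exactly the bookkeeping the paper's explicit formulas for $\sigma$ and $\rho(1)$ are handling.
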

\begin{proof}We sketch here the proof in~\cite{bhosle:1995} for one node, in order to give some intuition to the reader on the technical aspects. The proof goes analogously for many nodes.

Let
\begin{gather*}
\rho\colon \pi_{1}(Y)\cong \pi_{1}(X)\ast \ZZ \rightarrow \GL(r,\CC).
\end{gather*}
It factors through a representation $\rho_{X}\colon \pi_{1}(X)\rightarrow \GL(r,\CC)$. By Weil's theorem \cite[Section~7]{weil:1938}, $\rho_{X}$ gives us an indecomposable vector bundle of degree~$0$ on~$X$ which we denote by~$E_{\rho_{X}}$. Take $g:=\rho(1)\in \GL(r,\CC)$ the image of the generator of $\ZZ$, it gives an isomorphism of vector spaces $\sigma\colon (E_{\rho_{X}})_{x} \longrightarrow (E_{\rho_{X}})_{z}$; $v \mapsto g\cdot v$.

We then define $F_{1}(E_{\rho_{X}})$ to be the graph of $\sigma$ in $(E_{\rho_{X}})_{x}\oplus (E_{\rho_{X}})_{z}$.

Conversely, given a QPB $(E,\cF(E))$ indecomposable of degree $0$ we use Weil's theorem to produce a representation $\rho_{X}\colon \pi_{1}(X)\rightarrow \GL(r,\CC)$ which we extend to a representation of $\pi_{1}(Y)$ by assigning to the generator $1\in\ZZ$ the following composition
\begin{gather*}
\rho(1)=p_{z}\circ p_{x}^{-1},
\end{gather*}
where $p_{x}\colon F_{1}(E)\rightarrow E_{x}$, $p_{z}\colon F_{1}(E)\rightarrow E_{z}$ and, since we assumed all projections to be isomorphisms, $\rho(1)$ is an element in $\GL(r,\CC)$.

The second part of the statement is a consequence of Narasimhan--Seshadri's theorem \cite[Theorem~2(A)]{narasimhan-seshadri:1965}.
\end{proof}

Unfortunately, and unlike the parabolic case, it does not lead to an analogue of Narasimhan--Seshadri theorem for generalised parabolic bundles. In fact, Bhosle proved the existence of counterexamples.
\begin{Proposition}[{\cite[Proposition~3.2]{bhosle:1995}}]
Let $Y$ be a nodal curve, with nodes $\{y_{j}\}_{j=1}^{n}$ and geometric genus $g>0$. For $k>1$ or $k=1$ and at least two nodal points, there exist stable vector bundles of rank $2k+1$ and degree $0$ on $Y$ which are not associated to any representation of $\pi_{1}(Y)$ in $\GL(2k+1,\CC)$.
\end{Proposition}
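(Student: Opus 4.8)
The plan is to realise the desired bundle on $Y$ as the torsion free sheaf $V$ attached by~\eqref{eq:fV} to a suitable QPB $(E,\cF(E))$ on the normalisation $X$. The key point is Theorem~\ref{thm:repGPB}: if the underlying bundle $E$ is \emph{not} a direct sum of indecomposable bundles of degree $0$, then $(E,\cF(E))$ --- and therefore, via the bijection $f$ on the stratum where all projections are isomorphisms, $V$ --- is associated to no representation $\pi_{1}(Y)\to\GL(2k+1,\CC)$. So I would look for $(E,\cF(E))$ of rank $2k+1$, degree $0$, with flags of length $1$, $\dim F^{j}_{1}(E)=2k+1$ and all projections $p_{x_{j}},p_{z_{j}}$ isomorphisms, such that $(E,\cF(E))$ is $1$-stable while $E$ is not a sum of degree-$0$ indecomposables; then $V$ is locally free of rank $2k+1$ and degree $0$ by Proposition~\ref{prop:lf}, and stable by \cite[Proposition~4.2]{bhosle:1992}.

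The construction I propose is $E=A\oplus B$ with $A$ a stable bundle on $X$ of rank $k+1$ and degree $1$ and $B$ a stable bundle of rank $k$ and degree $-1$. Such bundles exist since $g>0$: automatically for $g\ge 2$, and for $g=1$ because $\gcd(k+1,1)=\gcd(k,1)=1$ (when $k=1$ one may simply take $B$ to be a line bundle of degree $-1$). Then $\rk E=2k+1$, $\deg E=0$, and since $A$ and $B$ are indecomposable and non-isomorphic, $A\oplus B$ is the Krull--Schmidt decomposition of $E$; its summands have non-zero degree, so $E$ is not a sum of degree-$0$ indecomposables. At each $D_{j}=x_{j}+z_{j}$ I take $F^{j}_{1}(E)$ to be the graph in $E_{x_{j}}\oplus E_{z_{j}}$ of a linear isomorphism $g_{j}\colon E_{x_{j}}\to E_{z_{j}}$, chosen generically; then $\dim F^{j}_{1}(E)=2k+1$ and both projections are isomorphisms.

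The heart of the matter is checking $1$-stability. The reference quantity in the $1$-stability inequality is $\deg E+\sum_{j}\dim F^{j}_{1}(E)=n(2k+1)$, so the threshold slope is $n$; for a proper subbundle $N$ of rank $s$ the corresponding numerator is $\deg N+\sum_{j}\dim\big(F^{j}_{1}(E)\cap N|_{D_{j}}\big)$, and for generic $g_{j}$ the last sum equals $n\max(0,2s-(2k+1))$. Hence $\pmu(N)<\pmu(E)$ reduces to $\deg N<n\min\big(s,\,2k+1-s\big)$. Now stability of $A$ and of $B$, applied to the saturation of $N\cap A$ in $A$ and to the saturation of the image of $N$ in $B$, forces $\deg N\le 0$ for every proper non-zero subbundle except $N=A$, for which $\deg A=1$; since $\min(s,2k+1-s)\ge 1$ whenever $1\le s\le 2k$, the inequality is automatic for all $N\ne A$, and for $N=A$ (where $s=k+1$) it reads $1<nk$. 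This is exactly the hypothesis ``$k>1$, or $k=1$ and $n\ge 2$'', and it is sharp: for $k=n=1$ the subbundle $A$ has parabolic slope equal to that of $E$, so the example collapses. Granting the genericity claim, $(E,\cF(E))$ is $1$-stable and $V$ is the required stable vector bundle on $Y$ of rank $2k+1$ and degree $0$ not coming from any representation.

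The step I expect to be the main obstacle is making ``generically'' into one valid choice of $(g_{1},\dots,g_{n})$ that beats \emph{all} proper subbundles of $E=A\oplus B$ at once. A subbundle $N$ violating $1$-stability has rank $\le 2k$ and, because $\sum_{j}\dim(F^{j}_{1}(E)\cap N|_{D_{j}})\le n(2k+1)$, degree bounded below; so such $N$ range over a bounded family $\cQ$. Evaluation $N\mapsto N|_{D_{j}}$ sends $\cQ$ to a constructible family of subspaces of $E|_{D_{j}}$, and for a fixed subspace $W$ the locus of isomorphisms $g_{j}$ whose graph meets $W$ in more than the expected dimension is closed of positive codimension; a dimension estimate on the incidence correspondence over $\cQ$ then shows that a generic tuple $(g_{1},\dots,g_{n})$ avoids all these loci simultaneously. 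Carrying out that estimate --- bounding $\dim\cQ$ against the codimensions of the exceptional loci --- is the technical core; the remaining ingredients (existence of $A$ and $B$, the slope computation above, and the transfer of stability and of the ``not associated to a representation'' property from $(E,\cF(E))$ to $V$ through $f$) are routine given the results quoted above.
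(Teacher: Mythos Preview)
The survey you are working from does not reproduce Bhosle's argument; it only quotes the statement from \cite{bhosle:1995} and moves on, so there is no proof in this paper to set yours against. That said, your strategy is exactly the one forced by Theorem~\ref{thm:repGPB}: exhibit a $1$-stable QPB $(E,\cF(E))$ on $X$, with flags of length~$1$, $\dim F^{j}_{1}(E)=2k+1$ and all projections isomorphisms, whose underlying bundle $E$ has an indecomposable summand of non-zero degree; the associated $V$ on $Y$ is then locally free and stable by Proposition~\ref{prop:lf} and \cite[Proposition~4.2]{bhosle:1992}, and by Theorem~\ref{thm:repGPB} cannot arise from any $\rho\colon\pi_{1}(Y)\to\GL(2k+1,\CC)$. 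This is also the shape of Bhosle's original construction.

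Your choice $E=A\oplus B$ with $A$ stable of rank $k+1$, degree $1$ and $B$ stable of rank $k$, degree $-1$ is correct, and the degree bookkeeping is accurate: for any proper subbundle $N\subset E$ one has $\deg N\le 1$, with equality only for $N=A$; under the generic-intersection hypothesis the $1$-stability inequality becomes $\deg N<n\min(s,2k+1-s)$, and the case $N=A$ gives precisely $1<nk$, matching the hypothesis and explaining its sharpness at $k=n=1$.

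The only genuine gap is the one you already isolate: turning ``generic $g_{j}$'' into an actual choice that defeats all subbundles at once. Your bounded-family/incidence sketch is the right mechanism. To make the count go through cleanly, note that a would-be destabiliser with $\deg N=0$ must satisfy $g_{j}(N_{x_{j}})=N_{z_{j}}$ for \emph{every} $j$ (since $\dim F^{j}_{1}(N)\le s$ always), which is a codimension-$n\,s(2k+1-s)$ condition on $(g_{1},\dots,g_{n})$, whereas the relevant Quot schemes of degree-$0$ subbundles of the fixed bundle $A\oplus B$ have dimension independent of~$n$. It also helps to first impose the single closed condition $\dim\big(A_{x_{j}}\cap g_{j}^{-1}(A_{z_{j}})\big)=1$ for each~$j$: this already disposes of $N=A$ (using $nk>1$) and of every destabiliser $N\subset A$ of rank $\ge 2$, so that only line subbundles of $A$ and subbundles containing $A$ remain, and for these the incidence estimate is straightforward. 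With that refinement the argument closes.
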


Nevertheless, in \cite[Section 5.2]{bhosle-biswas-hurtubise} the authors introduced a Hitchin--Kobayashi correspondence between the moduli space of GPBs and a moduli space of flat connections on $X$ framed on the points $\{x_{j},z_{j}\}_{j=1}^{n}$. Moreover, they also indicated how the Narasimhan--Seshadri correspondence should be for nodal curves.

\section{Generalised parabolic Hitchin pairs}
Let $X$ be an irreducible projective non-singular curve together with a finite set of disjoint divi\-sors~$\{D_{j}\}_{j=1}^{n}$ as above. Let $L_{0}$ be a line bundle on~$X$.

An \emph{$L_{0}$-twisted generalised parabolic Hitchin pair} on $X$ ($L_{0}$-twisted GPH, for short) of rank~$r$, degree $d$ and parabolic structure over $\{D_{j}\}_{j=1}^{n}$ is a triple $(E,\cF(E), \phi)$ consisting of a QPB $(E,\cF(E))$ together with a bundle homomorphism
\begin{gather*}
\phi\colon \ E\longrightarrow E\otimes L_{0}.
\end{gather*}

Notice that no compatibility condition is assumed for $\phi$ with respect to the generalised parabolic structure.

From now on we only consider flags of length~$1$, and we say that $(E,\cF(E),\phi)$ is a \emph{good} $L_{0}$-twisted GPH if
\begin{gather}\label{eq:good}
\phi_{D_{j}}\big(F^{j}_{1}(E)\big)\subset F^{j}_{1}(E).
\end{gather}
We call $\phi$ the \emph{generalised parabolic Higgs field}.

A \emph{homomorphism} of GPHs
\begin{gather*}
f\colon \ (E_{1},\cF(E_{1}), \phi_{1}) \longrightarrow (E_{2},\cF(E_{2}), \phi_{2})
\end{gather*}
is a homomorphism of vector bundles $f\colon E_{1}\rightarrow E_{2}$ which is compatible with their respective generalised parabolic Higgs fields, $\phi_{1}$ and $\phi_{2}$, and preserves the generalised parabolic structures for all $j$, i.e., $f_{D_{j}} \big(F^{j}_{1}(E_{1})\big)\subset F^{j}_{1}(E_{2})$.

For each subbundle $(N,\cF(N))$ of $(E,\cF(E))$ we say that it is $\phi$-\emph{invariant} if $\phi(N)\subset N\otimes L_{0}$. In such a case, $(N,\cF(N),\phi|_{N})$ becomes an $L_{0}$-twisted GPH.

Let $\alpha$ be a real number, such that $0<\alpha\le 1$. An $L_{0}$-twisted GPH $(E,\cF(E),\phi)$ is \emph{$\alpha$-semistable} (respectively $\alpha$-\emph{stable}) if for every proper, $\phi$-invariant, quasi-parabolic subbundle $(N, \cF(N))$, one has
\begin{gather*}
\frac{\deg(N)+\alpha\sum\limits_{j=1}^{n} \dim F^{j}_{1}(N)}{\rk(N)} \le \mathrm{(resp. <)}\frac{\deg(E)+\alpha \sum\limits_{j=1}^{n} \dim F^{j}_{1}(E)}{\rk(E)}.
\end{gather*}

Recall that for flags of lenght $1$ an $\alpha$-(semi)stable QPB $(E,\cF(E))$ is equivalent to a (semi)\-stab\-le GPB $(E,\cF(E),\alpha(E))$ with weights $\alpha(E)^{j}=(0,\alpha)$ for all $j$.

Bhosle constructed in~\cite{bhosle:2014} the moduli space of $L_{0}$-twisted GPHs.

\begin{Theorem}[{\cite[Theorem 4.8]{bhosle:2014}}]Let $X$ be a non-singular projective curve of genus $g$. Fix a line bundle $L_{0}$ on $X$ and fix $\alpha$, $0<\alpha\le 1$. Let $\{D_{j}\}_{j=1}^{n}$ be a finite set of disjoint divisors of $X$. Then there exists a moduli scheme $\cM(r,d,L_{0},\cF,\alpha)$ of $\alpha$ semistable $L_{0}$-twisted GPHs of rank~$r$, degree~$d$, flags of length~$1$ and weights $\alpha(E)^{j}=(0,\alpha)$.

Moreover, those which are good GPHs form a closed subscheme,
\begin{gather*}
\cM^{\rm good}(r,d,L_{0},\cF,\alpha)\subset \cM(r,d,L_{0},\cF,\alpha).
\end{gather*}
\end{Theorem}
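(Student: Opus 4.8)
The plan is to build $\cM(r,d,L_0,\cF,\alpha)$ by a standard Geometric Invariant Theory construction, parallel to Bhosle's construction of $M(r,d,\cF,\alpha)$ in \cite{bhosle:1996}, enhanced by the Higgs datum $\phi$. First I would fix the numerical invariants and observe that by the usual boundedness argument — an $\alpha$-semistable $L_0$-twisted GPH has underlying bundle $E$ whose Harder--Narasimhan slopes are controlled by $\alpha$, $r$, $d$ and the $\dim F^j_1(E)$, hence bounded — the family of all such $(E,\cF(E),\phi)$ is bounded. Consequently, after twisting by a sufficiently ample line bundle, every such $E$ is generated by global sections with vanishing higher cohomology, so all the objects are parametrised by a locally closed subscheme of a product of a Quot scheme (for $E$), relative flag bundles over it (for the $F^j_1$, using that the $D_j$ are disjoint Cartier divisors so $E|_{D_j}$ behaves well in families), and a linear scheme parametrising the homomorphism $\phi\colon E\to E\otimes L_0$ (a section of a vector bundle over the Quot scheme, by cohomology and base change). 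Call this parameter scheme $R$; it carries a natural $\SL$-action (or $\GL$, suitably rigidified) whose orbits are isomorphism classes.

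Next I would identify the GIT-(semi)stable points of $R$ with the $\alpha$-semistable $L_0$-twisted GPHs. The key point is to choose the linearisation on $R$ so that the Hilbert--Mumford numerical criterion for a one-parameter subgroup reproduces exactly the slope inequality in the definition of $\alpha$-(semi)stability, where the competitors are the $\phi$-invariant quasi-parabolic subbundles $(N,\cF(N))$; the $\phi$-invariance enters because a one-parameter subgroup of $\SL(V)$ induces a filtration of $E$ by subsheaves, and only those compatible with $\phi$ (i.e., destabilising in the Higgs-pair sense) contribute to the relevant weight. This is the Higgs analogue of the parabolic-bundle GIT analysis, and here one leans on Bhosle's already-established correspondence between $\alpha$-(semi)stability of QPBs with flags of length $1$ and (semi)stability of the associated GPBs (Remark~\ref{rmk:stab}); adding $\phi$ only restricts the set of test subobjects, which is exactly what the Higgs-type GIT setup encodes. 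Taking the GIT quotient $\cM(r,d,L_0,\cF,\alpha):=R^{ss}/\!\!/\SL$ then produces the moduli scheme, with the usual caveat that its points are $S$-equivalence classes of semistable objects.

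Finally I would check the "good" assertion. The condition \eqref{eq:good}, $\phi_{D_j}(F^j_1(E))\subset F^j_1(E)$, is a closed condition on $R$: over the flag bundle the subspaces $F^j_1$ form a subbundle of (the pullback of) $E|_{D_j}$, and $\phi$ induces, by restriction to $D_j$ and composition with the projection $E|_{D_j}\otimes L_0|_{D_j}\to (E|_{D_j}/F^j_1)\otimes L_0|_{D_j}$, a morphism of vector bundles over $R$ whose vanishing locus $Z\subset R$ is closed and $\SL$-invariant. Hence $Z^{ss}=Z\cap R^{ss}$ is a closed $\SL$-invariant subscheme of $R^{ss}$, and its image — equivalently the GIT quotient $Z^{ss}/\!\!/\SL$ — is a closed subscheme of $\cM(r,d,L_0,\cF,\alpha)$, which we name $\cM^{\rm good}(r,d,L_0,\cF,\alpha)$. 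The main obstacle, as usual in such constructions, is the Hilbert--Mumford computation of step two: verifying that the chosen linearisation's numerical function on one-parameter subgroups matches the parabolic-Higgs slope inequality exactly, including the correct bookkeeping of the parabolic weight $\alpha$ attached to $F^j_1$ and the restriction to $\phi$-invariant test subbundles. Everything else (boundedness, construction of $R$, closedness of the good locus) is routine once that correspondence is pinned down, and I would refer to \cite{bhosle:2014} for the detailed verification.
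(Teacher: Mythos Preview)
The paper under review is a survey and does not give a proof of this theorem; it simply records the statement and cites \cite[Theorem~4.8]{bhosle:2014} for the construction. So there is no ``paper's own proof'' to compare against here.

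That said, your sketch is a reasonable outline of a GIT construction in the spirit of the cited reference: bound the family, parametrise by a Quot scheme fibred with relative flag varieties and a linear space for~$\phi$, linearise so that the Hilbert--Mumford criterion recovers the $\alpha$-slope inequality on $\phi$-invariant subbundles, and take the GIT quotient; the closedness of the good locus is indeed the vanishing of the induced map $F^j_1\to (E|_{D_j}/F^j_1)\otimes L_0|_{D_j}$, which is a closed, group-invariant condition descending to the quotient. The only substantive point you flag as the obstacle --- matching the numerical function on one-parameter subgroups with the parabolic-Higgs slope --- is exactly where the work lies, and you correctly defer to \cite{bhosle:2014} for it. Since the present paper does likewise, there is nothing further to compare.
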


\section{Hitchin pairs on a nodal curve and GPHs}

Let $Y$ be a nodal curve and $L$ a line bundle on~$Y$. A \emph{$(L$-twisted$)$ Hitchin pair} on~$Y$ is a pair $(V,\varphi)$ consisting of a coherent torsion free sheaf~$V$ on~$Y$ together with a morphism of sheaves
 \begin{gather*}
 \varphi\colon \ V\longrightarrow V\otimes L
 \end{gather*}
 on $Y$.

 There are several choices we may want to do at this point regarding whether we may want~$V$ to be a bundle or~$L$ to be the canonical line bundle on~$Y$, each of these choices lead us to different names for these pairs.

An $L$-twisted Hitchin pair for which $V$ is locally free and $\varphi$ is a bundle homomorphism is called an \emph{$L$-twisted Hitchin bundle}. If~$L$ is the canonical bundle on~$Y$ then a $L$-twisted Hitchin pair $(V,\varphi)$ is called \emph{Higgs pair}. Finally, a Higgs pair for which $V$ is locally free and $\varphi$ a bundle homomorphism is called a \emph{Higgs bundle} on~$Y$.

Fix $L_{0}=p^{\ast}L$. We want to explore the relation between good $L_{0}$-twisted GPHs on $X$ and $L$-twisted Hitchin pairs on~$Y$. This is done in~\cite{bhosle:2014} for general singularities, but since we are focusing only on the case of nodal singularities we shall assume that $D_{j}=p^{-1}(y_{j})=x_{j}+z_{j}$ for all~$j$, and flags have length~$1$ as in Section~\ref{sec:GPBnodal}.

\begin{Proposition}[{\cite[Proposition 2.8]{bhosle:2014}}]\quad
\begin{enumerate}\itemsep=0pt
\item[$1.$] A good $p^{\ast}L$-twisted GPH, $(E,\cF(E), \phi)$, of rank $r$ degree $d$ on $X$ defines an $L$-twisted Hitchin pair $(V,\varphi)$ of rank $r$ and degree~$d$ on~$Y$.
\item[$2.$] If $(V,\varphi)$ is an $L$-twisted Hitchin bundle then $(V,\varphi)$ determines a good $p^{\ast}L$-twisted GPH $(E,\cF(E),\phi)$ where $E=p^{\ast}(V)$ and
\begin{gather*}
\phi=p^{\ast}\varphi\colon \ E\longrightarrow E\otimes p^{\ast}L.
\end{gather*}
\end{enumerate}
\end{Proposition}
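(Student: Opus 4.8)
The plan is to obtain $\varphi$ in part~1 by pushing forward the Higgs field and restricting to the subsheaf $V\subset p_*E$, and to obtain the GPH in part~2 by pulling back along $p$ and equipping $p^*V$ with the ``diagonal'' parabolic structure at each node.

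For part~1, I would apply $p_*$ to $\phi\colon E\to E\otimes p^*L$. By the projection formula $p_*(E\otimes p^*L)\cong p_*(E)\otimes L$ (valid since $p$ is finite and $L$ is a line bundle), so $p_*\phi$ is a morphism $p_*(E)\to p_*(E)\otimes L$. Tensoring the defining sequence~\eqref{eq:fV} with $L$ is exact, so it exhibits $V\otimes L$ as the kernel of $p_*(E)\otimes L\to\bigoplus_j\big(p_*(E)\otimes k(y_j)/F^j_1(E)\big)\otimes L$. Hence it suffices to show $p_*\phi$ carries $V$ into $V\otimes L$, for which I would argue fibrewise over each node: a local section of $p_*(E)$ lies in $V$ exactly when its value in $E_{x_j}\oplus E_{z_j}$ lies in $F^j_1(E)$, and (after trivialising $p^*L$ near $x_j,z_j$) $p_*\phi$ induces there the map $\phi_{x_j}\oplus\phi_{z_j}$. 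The goodness hypothesis~\eqref{eq:good}, $\phi_{D_j}(F^j_1(E))\subset F^j_1(E)$, is precisely what sends such a value into $F^j_1(E)\otimes L_{y_j}$. Then $\varphi:=p_*\phi|_V$ is the desired map; its rank and degree are $r$ and $d$ because $V$ in~\eqref{eq:fV} already has rank $r$ and degree $d$.

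For part~2, with $V$ locally free set $E:=p^*V$, locally free of rank $r$, and $\phi:=p^*\varphi\colon p^*V\to p^*(V\otimes L)=E\otimes p^*L$. Since $p$ is an isomorphism off the nodes, $E_{x_j}$ and $E_{z_j}$ are both canonically identified with $V_{y_j}$, so I would let $F^j_1(E)\subset E_{x_j}\oplus E_{z_j}$ be the diagonal copy of $V_{y_j}$, a length-$1$ flag with $\dim F^j_1(E)=r$ and both projections $p_{x_j},p_{z_j}$ equal to the identity. Two checks remain. First, feeding this QPB into~\eqref{eq:fV} returns $V$: locally at $y_j$ the inclusion $\cO_{y_j}\hookrightarrow\cO_{x_j}\times\cO_{z_j}$ identifies $V_{y_j}$ with the sections of $p_*p^*V$ whose two branch-values agree, i.e.\ with the diagonal, so $V=\ker\big(p_*p^*V\to\bigoplus_j(V_{y_j}\oplus V_{y_j})/\Delta\big)$ (consistently with Proposition~\ref{prop:lf}(1)). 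Second, $(E,\cF(E),\phi)$ is good: since $\phi=p^*\varphi$, both $\phi_{x_j}$ and $\phi_{z_j}$ equal $\varphi_{y_j}$, so the induced fibre map sends $(v,v)$ to $(\varphi_{y_j}v,\varphi_{y_j}v)$, again in the diagonal.

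The routine part is the bookkeeping with fibres and stalks near a node; the step I expect to be the real obstacle is making the descent in part~1 precise at the level of sheaves rather than fibres, i.e.\ that $p_*\phi$ genuinely preserves the kernel $V$ of~\eqref{eq:fV}. The cleanest route is to pass to the local rings at the nodes, writing $\cO_{y_j}\hookrightarrow\widetilde{\cO}_{y_j}=\cO_{x_j}\times\cO_{z_j}$, describe $V_{y_j}$ and $(p_*E)_{y_j}$ explicitly as modules together with the $F^j_1$-condition, and verify that $p_*\phi$ respects the submodule; away from the nodes $p$ is an isomorphism and nothing needs checking.
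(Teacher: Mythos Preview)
Your proposal is correct and follows essentially the same route as the paper: for part~1 the paper also pushes forward $\phi$, tensors the defining exact sequence by $L$, and observes via a commutative diagram that the goodness condition~\eqref{eq:good} makes $p_*\phi$ descend to the kernel $V$; for part~2 the paper likewise takes $E=p^*V$, $\phi=p^*\varphi$, invokes the QPB structure coming from Theorem~\ref{thm:desin} (which is exactly your diagonal $F^j_1(E)\cong V_{y_j}$), and checks goodness from $\varphi_{y_j}(V_{y_j})\subset V_{y_j}$. Your account is somewhat more explicit about the projection formula and the diagonal description, but the argument is the same.
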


\begin{proof}1.~The pair $(E,\cF(E))$ on $X$ determines a torsion free sheaf on $Y$ defined as in (\ref{eq:fV}), that is, $V$ is the kernel in the following exact sequence
\begin{gather}\label{seq:tfY}
0\longrightarrow V\longrightarrow p_{\ast}(E)\longrightarrow p_{\ast}\left(\bigoplus_{j}\frac{E\otimes \cO_{D_{j}}}{F^{j}_{1}(E)}\right)\longrightarrow 0.
\end{gather}
Recall that, because of the choices made on the generalised parabolic structure, $\deg(E)=\deg(V)$ and $\rk(E)=\rk(V)$. Tensoring (\ref{seq:tfY}) by $L$ and recalling (\ref{eq:good}) the induced map on the kernels give us $\varphi$, i.e.,
\begin{gather*}
\xymatrix{0 \ar[r] & V \ar[r]\ar[d]_{\varphi} & p_{\ast}(E)\ar[r]\ar[d]^{p_{\ast}\phi} & \bigoplus_{j}\frac{p_{\ast}(E\otimes \cO_{D_{j}})}{p_{\ast}(F^{j}_{1}(E))}\ar[r] \ar[d]^{\bigoplus_{j} (p_{\ast}\phi)_{y_{j}}}& 0\\
0 \ar[r] & V \ar[r]& p_{\ast}(E)\ar[r]\ & \bigoplus_{j}\frac{p_{\ast}(E\otimes \cO_{D_{j}})}{p_{\ast}(F^{j}_{1}(E))}\ar[r] & 0.
}
\end{gather*}

2.~An $L$-twisted Hitchin bundle $(V,\varphi)$ determines a good $p^{\ast}L$-twisted GPH in the following way. By Theorem~\ref{thm:desin}, $V$ determines a QPB $(E,\cF(E))$ with flags of length~$1$ where $E=p^{\ast}V$. We define $\phi=p^{\ast}\varphi$. Notice that, $p_{\ast}(F^{j}_{1}(E))$ should be such that $V_{y_{j}}\subset (p_{\ast}(E))_{y_{j}}$. Since $\varphi_{y_{j}}(V_{y_{j}})\subset V_{y_{j}}$, $\phi$ satisfies condition~(\ref{eq:good}).
\end{proof}

There is an equivalence of categories given by the following theorem.

\begin{Theorem}[{\cite[Theorem 2.9]{bhosle:2014}}]\label{thm:functor}There exists a functor
\begin{gather*}
F\colon \ \GPH^{\rm good}(r, d, p^{\ast}L) \longrightarrow \Hitchin(r,d,L)
\end{gather*}
between the category of good $p^{\ast}L$-twisted GPHs of rank $r$ and degree $d$ on $X$ and the category of $L$-twisted Hitchin pairs of rank $r$ degree $d$ on $Y$.

Moreover, the restriction of $F$ to the full subcategory $\GPH^{{\rm good},b}(r,d,p^{\ast}L)$ of good $p^{\ast}L$-twisted $\GPH$ on~$X$ that correspond to $L$-twisted Hitchin bundles on~$Y$ gives an equivalence of categories.
\end{Theorem}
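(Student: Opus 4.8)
The plan is to construct the functor $F$ explicitly on objects and morphisms using the two constructions already described in Proposition~2.8 above, and then to verify functoriality and the equivalence statement separately. On objects, $F$ sends a good $p^{\ast}L$-twisted GPH $(E,\cF(E),\phi)$ to the $L$-twisted Hitchin pair $(V,\varphi)$ produced by the first part of Proposition~2.8: take $V$ to be the kernel in~(\ref{seq:tfY}) and let $\varphi$ be the morphism induced on kernels after tensoring by $L$, which is well defined precisely because of the goodness condition~(\ref{eq:good}). On morphisms, given a homomorphism $f\colon (E_{1},\cF(E_{1}),\phi_{1})\to(E_{2},\cF(E_{2}),\phi_{2})$ of good GPHs, one pushes forward to get $p_{\ast}f\colon p_{\ast}(E_{1})\to p_{\ast}(E_{2})$; since $f$ preserves the flags $F^{j}_{1}$ for all $j$, it descends to a map of the quotient sheaves appearing on the right of~(\ref{seq:tfY}), hence by the snake lemma (or simply by functoriality of kernels) induces a map $V_{1}\to V_{2}$, and the compatibility of $f$ with $\phi_{1},\phi_{2}$ gives compatibility of this induced map with $\varphi_{1},\varphi_{2}$. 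One then checks $F(\mathrm{id})=\mathrm{id}$ and $F(g\circ f)=F(g)\circ F(f)$, both of which follow from the corresponding properties of $p_{\ast}$ and the uniqueness of induced maps between kernels of a fixed diagram.

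For the second assertion, I would first identify the essential image: by Proposition~2.8(2), every $L$-twisted Hitchin bundle $(V,\varphi)$ on $Y$ arises as $F$ of the good GPH $(p^{\ast}V, \cF(p^{\ast}V), p^{\ast}\varphi)$, where the quasi-parabolic structure on $p^{\ast}V$ is the canonical one determined by the inclusion $V_{y_{j}}\subset (p_{\ast}p^{\ast}V)_{y_{j}}$ as in Theorem~\ref{thm:desin}; this shows the restriction of $F$ to $\GPH^{{\rm good},b}$ is essentially surjective onto $\Hitchin^{b}(r,d,L)$ (the subcategory of Hitchin bundles). It then remains to show this restricted functor is fully faithful. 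The key point is that for a GPB coming from a locally free $V$, all the projections $p_{x_{j}},p_{z_{j}}$ are isomorphisms (Proposition~\ref{prop:lf}(1)), so the flag $F^{j}_{1}(E)$ is recovered from $V$ as the fibre of $V$ inside $E_{x_{j}}\oplus E_{z_{j}}$; consequently the assignments $(E,\cF(E),\phi)\mapsto(V,\varphi)$ and $(V,\varphi)\mapsto(p^{\ast}V,\cF(p^{\ast}V),p^{\ast}\varphi)$ are mutually inverse on objects, and one checks they are mutually inverse on Hom-sets using adjunction between $p^{\ast}$ and $p_{\ast}$ together with the fact that a bundle map $p^{\ast}V_{1}\to p^{\ast}V_{2}$ respecting the flags $F^{j}_{1}$ is exactly the pullback of a bundle map $V_{1}\to V_{2}$ on $Y$.

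The main obstacle I expect is the full faithfulness in the locally free case, specifically the claim that $\Hom$ in the two categories match up exactly. Injectivity of $F$ on morphisms is easy since $p^{\ast}f$ can be recovered from $f$ on the generic point where $p$ is an isomorphism; the delicate direction is surjectivity on morphisms, i.e.\ that \emph{every} homomorphism $g\colon (V_{1},\varphi_{1})\to(V_{2},\varphi_{2})$ of $L$-twisted Hitchin bundles pulls back to a morphism of the associated good GPHs that respects the quasi-parabolic flags at every $x_{j},z_{j}$. This amounts to checking that the flag $F^{j}_{1}(p^{\ast}V_{i})$, which is the image of $(V_{i})_{y_{j}}$ in $(p^{\ast}V_{i})_{x_{j}}\oplus(p^{\ast}V_{i})_{z_{j}}$, is carried into $F^{j}_{1}(p^{\ast}V_{2})$ by $(p^{\ast}g)_{D_{j}}$ — which holds because $g_{y_{j}}$ maps $(V_{1})_{y_{j}}$ into $(V_{2})_{y_{j}}$ by definition of a sheaf morphism on $Y$, and pullback is compatible with fibres at the preimages of the node. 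Once this local bookkeeping at the nodes is in place, compatibility with the Higgs fields is automatic from $\phi_{i}=p^{\ast}\varphi_{i}$, and the equivalence follows. I would also remark that $F$ is \emph{not} an equivalence on the larger category $\GPH^{\rm good}$, since the non-locally-free torsion free sheaves on $Y$ correspond to GPHs where some projection $p_{x_{j}}$ or $p_{z_{j}}$ fails to be an isomorphism, and for such objects one cannot reconstruct the flag from $V$ alone.
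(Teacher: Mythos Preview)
Your argument is correct and follows exactly the line one would expect: the object-level constructions are those of Proposition~2.8, the extension to morphisms uses functoriality of $p_{\ast}$ and of kernels, and the equivalence on the locally free subcategory comes from the mutual inverse $(V,\varphi)\mapsto (p^{\ast}V,\cF(p^{\ast}V),p^{\ast}\varphi)$ together with the observation (via Proposition~\ref{prop:lf}(1)) that the flag is recoverable from~$V$ precisely when both projections are isomorphisms.

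Note, however, that the present paper is a survey and does \emph{not} supply its own proof of this theorem: it states the result with a citation to \cite[Theorem~2.9]{bhosle:2014} and gives only the object-level correspondence in the proof of the preceding proposition. So there is nothing in the paper to compare your argument against beyond that proposition, which you have used correctly. Your treatment of full faithfulness---identifying surjectivity on morphisms as the only nontrivial point and reducing it to the fact that $g_{y_{j}}$ carries $(V_{1})_{y_{j}}$ into $(V_{2})_{y_{j}}$, hence $(p^{\ast}g)_{D_{j}}$ preserves the flags---is the right local calculation, and your closing remark on why $F$ fails to be an equivalence on all of $\GPH^{\rm good}$ is also accurate. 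One small wording issue: in your injectivity argument you write ``$p^{\ast}f$ can be recovered from $f$''; what you mean is that the bundle map $f\colon E_{1}\to E_{2}$ is determined by the induced map $V_{1}\to V_{2}$, which holds because $V_{i}\hookrightarrow p_{\ast}E_{i}$ is an isomorphism over the smooth locus of~$Y$ and the $E_{i}$ are torsion free.
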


\subsection{The moduli spaces}

The functor $F$ in Theorem \ref{thm:functor} preserves semistability for $\alpha=1$, i.e., $(E,F(E),\phi)$ is $1$-semistable if and only if the associated Hitchin pair $(V,\varphi)$ is semistable. But the stability is only induced in one direction, that is, if the Hitchin pair $(V,\varphi)$ is stable then its corresponding GPH $(E,F(E),\phi)$ is $1$-stable (see Bhosle \cite[Theorem~2.9]{bhosle:2014}).

We denote by $\cH(r,d,L)$ the moduli space of semistable $L$-twisted Hitchin pairs on $Y$ of rank~$r$ and degree $d$, and by $\cH^{b}(r,d,L)$ the moduli space of semistable $L$-twisted Hitchin bundles. The construction of this moduli space is sketched in~\cite{bhosle-parameswaran-singh} following Simpson's construction in~\cite{simpson:1994, simpson:1994-2}.

\begin{Theorem}[{\cite[Theorem 4.9]{bhosle:2014}}]\label{thm:birat} Let $Y$ be a nodal curve with nodes $y_{j}\in Y$ $j=1, \ldots, n$, and let $p\colon X\rightarrow L$ be its normalisation. Fix $D_{j}=p^{-1}(y_{j})=x_{j}+z_{j}$ for all~$j$. Then, if $\alpha=1$ or $\alpha<1$ and close to $1$, there exist a birational morphism
\begin{gather*}
f\colon \ \cM^{\rm good}(r,d,L_{0},\cF,\alpha)\longrightarrow \cH(r,d,L)
\end{gather*}
from the moduli space of semistable good $L_{0}=p^{\ast}L$-twisted GPH of rank $r$ and degree $d$ with flags of length $1$ and $\alpha(E)^{j}=(0,\alpha)$ for all $j$, to the moduli of semistable $L$-twisted Hitchin pairs of rank $r$ and degree $d$.

When $\alpha=1$, the image $f\big(\cM^{\rm good}(r,d,L_{0},\cF,\alpha)\big)$ contains $\cH^{b}(r,d,L)$, i.e., all semistable Hitchin bundles, and when $\alpha<1$, but close to $1$\footnote{The condition \emph{close to $1$} for $\alpha$ is explicitly determined in~\cite{bhosle:2014}.}, $f\big(\cM^{\rm good}(r,d,L_{0},\cF,\alpha)\big)$ contains all stable Hitchin bundles.
\end{Theorem}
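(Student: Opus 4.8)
The plan is to build the birational morphism $f$ by first producing a morphism of schemes and then analysing where it is an isomorphism. Since Theorem~\ref{thm:functor} already gives a functor $F$ from good $p^{\ast}L$-twisted GPHs on $X$ to $L$-twisted Hitchin pairs on $Y$, the first step is to check that this functor is sufficiently well-behaved in families (i.e.\ that the exact sequence~\eqref{seq:tfY} and the induced map on kernels commute with base change on the parameter scheme) so that it induces a morphism of functors and hence, by the existence of the two moduli schemes $\cM^{\rm good}(r,d,L_{0},\cF,\alpha)$ and $\cH(r,d,L)$ (and universality of the latter, or at least the coarse moduli property), a morphism of varieties $f$. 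The semistability compatibility noted just before the theorem statement---$F$ preserves $1$-semistability and takes stable Hitchin pairs to $1$-stable GPHs---guarantees that $f$ lands inside $\cH(r,d,L)$ and, for $\alpha$ close to $1$, that one can use Proposition~\ref{prop:stab} to replace $1$-semistability by $\alpha$-stability on a large open set, so that the source is well-defined.

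Next I would identify a dense open subset on which $f$ is an isomorphism. The natural candidate is the locus corresponding to $L$-twisted Hitchin bundles whose underlying sheaf is locally free at every node --- concretely, the stratum $M_{(r,\dots,r)}$ of Theorem~\ref{thm:desin} carrying a Higgs field, where all the projections $p_{x_{j}}, p_{z_{j}}$ are isomorphisms. On this stratum part~(2) of Theorem~\ref{thm:desin} already tells us $f$ is birational at the level of the underlying GPB/torsion-free-sheaf correspondence, and the equivalence of categories in the second half of Theorem~\ref{thm:functor} (the subcategory $\GPH^{{\rm good},b}$ corresponding to Hitchin bundles) promotes this to an equivalence once the Higgs field is carried along: given a Hitchin bundle $(V,\varphi)$ one recovers $(E,\cF(E),\phi)=(p^{\ast}V,\text{graph},p^{\ast}\varphi)$ functorially, so $f$ restricted to the preimage of the locally-free locus is a bijection with regular inverse. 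Since the locally-free Hitchin bundles are dense in $\cH(r,d,L)$ (the torsion-free strata are of positive codimension, as in~\eqref{eq:strat}), this gives birationality of $f$.

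The surjectivity-type statements in the last two sentences I would handle stratum by stratum. For $\alpha=1$: every semistable Hitchin bundle $(V,\varphi)$ with $V$ locally free pulls back to a good $p^{\ast}L$-twisted GPH by Proposition~\ref{prop:lf}(1) and the construction in the proof of the preceding Proposition (part~2), and $F$ preserves $1$-semistability, so the whole of $\cH^{b}(r,d,L)$ lies in the image. For $\alpha<1$ close to $1$ one loses the semistable-but-not-stable Hitchin bundles under the stability comparison (stability is induced only in the direction Hitchin-stable $\Rightarrow$ GPH-$1$-stable, and shifting $\alpha$ off $1$ can destroy strictly semistable objects), but Proposition~\ref{prop:stab}(1) shows $1$-stable QPBs remain $\alpha$-stable in that range, so the pullbacks of \emph{stable} Hitchin bundles still give points of $\cM^{\rm good}(r,d,L_{0},\cF,\alpha)$, yielding the weaker containment claimed.

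The main obstacle I expect is the base-change/families step that upgrades the functor $F$ to an actual morphism of moduli schemes: one must check that forming the torsion-free sheaf $V$ via~\eqref{seq:tfY} and the induced Higgs field $\varphi$ behaves well in flat families over an arbitrary base, i.e.\ that no unexpected jumping of the cokernel sheaves $\bigoplus_{j} (E\otimes\cO_{D_{j}})/F^{j}_{1}(E)$ occurs, and that $\alpha$-stability is an open condition compatible with the $1$-semistability used on the Hitchin side. Controlling this across the strata in~\eqref{eq:strat} --- in particular near the boundary where the projections $p_{x_{j}}, p_{z_{j}}$ degenerate and $V$ ceases to be locally free --- is the delicate point, and is exactly where one must invoke the constructions of~\cite{bhosle:2014} and Simpson's GIT construction of $\cH(r,d,L)$ from~\cite{simpson:1994, simpson:1994-2} rather than argue by hand.
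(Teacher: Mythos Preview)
The paper does not contain a proof of this theorem: it is a survey article that explicitly states it ``shall not reproduce the proofs for most of the results addressed but refer to the corresponding literature,'' and Theorem~\ref{thm:birat} is simply quoted from \cite[Theorem~4.9]{bhosle:2014} with no argument given. There is therefore nothing in the present paper to compare your proposal against.

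That said, your outline is a reasonable sketch of how the original argument in~\cite{bhosle:2014} proceeds: one shows the functor $F$ of Theorem~\ref{thm:functor} works in families (so that it descends to a morphism of moduli schemes via the GIT constructions), and then uses the equivalence of categories on the Hitchin-bundle locus $\GPH^{{\rm good},b}$ together with the density of this locus to obtain birationality. One caution: you invoke Proposition~\ref{prop:stab} for the $\alpha$-versus-$1$ stability comparison, but that proposition is stated for QPBs/GPBs without Higgs field; the analogous comparison for GPHs (only $\phi$-invariant subbundles are tested) requires a separate, though parallel, argument carried out in~\cite{bhosle:2014}. You correctly identify the flatness/base-change verification for the family version of~\eqref{seq:tfY} as the technical crux.
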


We denote by $\cH^{b,s}(r,d,L)$ the moduli space of stable $L$-twisted Hitchin bundles of rank $r$ and degree $d$.

\subsection{The Hitchin map}

Let $\GPH(r, d, L_{0})$ be the set of $L_{0}$-twisted GPH of rank $r$ and degree~$d$ on~$X$, and $\GPH^{{\rm good},b}(r,$ $d, L_{0})$ be the set of good $L_{0}$-twisted GPH which determine Hitchin bundles. Let $\Hitchin (r, d, L)$ be the set of $L$-twisted Hitchin pairs on~$Y$, and $\Hitchin^{b}(r, d, L)$ the set of Hitchin bundles.

There are maps
\begin{gather*}
h_{X}\colon \ \GPH(r,d,L_{0}) \longrightarrow \bigoplus_{i=1}^{r} H^{0}\big(X, L^{i}_{0}\big),\qquad
(E, F(E), \phi) \mapsto \bigoplus_{i=1}^{r}\tr \phi^{i}
\end{gather*}
and
\begin{gather*}
h_{Y}\colon \ \Hitchin^{b}(r,d,L) \longrightarrow \bigoplus _{i=1}^{r} H^{0}\big(Y,L^{i}\big),\qquad (V, \varphi)\mapsto \bigoplus_{i=1}^{r}\tr \varphi^{i},
\end{gather*}
where $\phi^{i}=\phi\circ \cdots\circ \phi$, and $\varphi^{i}=\varphi\circ\cdots\circ\varphi$, $i$-times, and $\tr$ is the trace operator.

Notice that $H^{0}\big(Y,L^{i}\big)\subset H^{0}\big(L^{i}\otimes p_{\ast}(\cO_{X})\big)$ and $h_{X}\big(\GPH^{\rm good}(r,d,L_{0})\big)\subset \bigoplus _{i=1}^{r} H^{0}\big(Y,L^{i}\big)$.

Hence, $h_{X}$ and $h_{Y}$ fit into the following diagram
\begin{gather*}
\xymatrix{
h_{X}\colon \ \GPH^{{\rm good},b} (r,d,L_0)\ar[r]\ar@<1ex>[d]^{f} & \bigoplus _{i=1}^{r} H^{0}\big(Y,L^{i}\big)\ar@{=}[d]\\
h_{Y}\colon \ \Hitchin^{b}(r,d,L)\ar[r] \ar@<1ex>[u]^{p^{\ast}}& \bigoplus _{i=1}^{r} H^{0}\big(Y,L^{i}\big).
}
\end{gather*}

The maps $p^{\ast}$ and $f$ induce homeomorphisms on the corresponding moduli spaces.

At the level of moduli spaces there is a map
\begin{gather*}
h_{\cM}\colon \ \cM(r,d,L_{0},\cF,\alpha)\longrightarrow A:=\bigoplus_{i=1}^{r} H^{0}\big(X,S^{i}L_{0}\big),
\end{gather*}
where $S^{i}L_{0}$ denotes the symmetric product of the line bundle $L_{0}$ (see \cite[Section~5]{bhosle:2014} for details).

\begin{Theorem}[{\cite[Corollary 5.2]{bhosle:2014}}]\quad
\begin{enumerate}\itemsep=0pt
\item[$1.$] The restriction of $h_{\cM}$ to $\cM^{\rm good}(r,d,L_{0},\cF,\alpha)$ is a pro\-per morphism.
\item[$2.$] The Hitchin map $h_{\cM^{\rm good}}$ defines a proper morphism
\begin{gather*}
h_{\cH}\colon \ f\big(\cM^{\rm good}(r,d,L_{0},\cF,\alpha)\big)\rightarrow A
\end{gather*}
\item[$3.$] Let $A':=\bigoplus_{i=1}^{r}H^{0}\big(Y,L^{i}\big)\subset A$. Then for $\alpha=1$, there is a commutative diagram
\begin{gather*}
\xymatrix{
\cM^{\rm good}(r,d,L_{0},\cF,\alpha) \ar[r]^{h_{\cM^{\rm good}}}\ar[d]^{f}& A'\\
\cH^{b}(r,d,L)\subset f\big(\cM^{\rm good}(r,d,L_{0},\cF,\alpha)\big)\subset \cH(r,d,L)\ar[ru]_{h_{\cH^{b}}} &
}
\end{gather*}
Hence $h_{\cM^{\rm good}}= h_{\cH^{b}} \circ f$ on $f^{-1}\big(\cH^{b}(r,d,L)\big)$ and $h_{\cM^{\rm good}}\circ p^{\ast}= h_{\cH^{b}}$ on $\cH^{b}(r,d,L)$.

When $\alpha<1$ but close to $1$, as only $\cH^{b,s}(r,d,L)\subset f\big(\cM^{\rm good}(r,d,L_{0},\cF,\alpha)\big)$ we need to consider the maps defined on the intersection
\begin{gather*}
\cH^{b}(r,d,L)\cap f\big(\cM^{\rm good}(r,d,L_{0},\cF,\alpha)\big).
\end{gather*}
\end{enumerate}
\end{Theorem}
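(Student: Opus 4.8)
The plan is to reduce parts~2 and~3 to the results already recorded in Theorems~\ref{thm:desin},~\ref{thm:functor} and~\ref{thm:birat}, so that the only substantial content lies in the properness statement of part~1; this I would establish by the valuative criterion, following the properness proofs of Nitsure and Simpson for the Hitchin fibration, adapted to the generalised parabolic setting as in~\cite[Section~5]{bhosle:2014}.

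For part~1, note first that $\cM^{\rm good}(r,d,L_{0},\cF,\alpha)$ is a closed subscheme of the quasi-projective scheme $\cM(r,d,L_{0},\cF,\alpha)$, hence separated and of finite type over $k$; it therefore suffices to verify the valuative criterion of universal closedness for $h_{\cM^{\rm good}}$. Let $R$ be a discrete valuation ring with fraction field $K$ and residue field $\kappa$, let $(E_{K},\cF(E_{K}),\phi_{K})$ be a good $L_{0}$-twisted GPH over $X_{K}$, and suppose that its image $\bigoplus_{i}\tr\phi_{K}^{i}$ in $A$ extends to a section over $X_{R}$. After a finite extension of $R$ one extends the underlying bundle to a locally free sheaf $\widetilde{E}$ on $X_{R}$ and the flag subbundle uniquely to a subbundle over $\sum_{j}D_{j}$, using completeness of the relevant parameter schemes. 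A~priori $\phi_{K}$ extends only as a meromorphic Higgs field with a pole of some order along the special fibre; the crucial step, exactly as in Nitsure's argument, is that boundedness of the characteristic coefficients forces the pole order down to zero after finitely many elementary modifications of $\widetilde{E}$ supported on the special fibre $X_{\kappa}$, which produces a regular $L_{0}$-twisted GPH over $\Spec R$. If its special fibre is not $\alpha$-semistable, Langton's method --- elementary modifications along $X_{\kappa}$ by the maximal $\alpha$-destabilising $\phi$-invariant quasi-parabolic subbundle, which stay inside the good category and strictly decrease the $\alpha$-instability, a process that terminates by the boundedness built into the construction of $\cM$ --- replaces it by an $\alpha$-semistable one without altering the generic fibre or the Hitchin image. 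The resulting family defines a morphism $\Spec R\to\cM$ whose generic point lands in $\cM^{\rm good}$, hence which factors through the closed subscheme $\cM^{\rm good}$; uniqueness of the extension follows from separatedness of $\cM$. Thus $h_{\cM^{\rm good}}$ is universally closed, and, being separated of finite type, proper.

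For part~2, the morphism $f$ of Theorem~\ref{thm:birat} sends a good GPH to the Hitchin pair $(V,\varphi)$ of~\eqref{seq:tfY}, and by construction $\bigoplus_{i}\tr\phi^{i}$ and $\bigoplus_{i}\tr\varphi^{i}$ correspond under the closed immersion $A':=\bigoplus_{i}H^{0}(Y,L^{i})\hookrightarrow A$; hence $h_{\cM^{\rm good}}$ is constant on the fibres of $f$ and factors through a morphism $h_{\cH}\colon f(\cM^{\rm good})\to A$ with image in $A'$. Since $f$ is surjective onto $f(\cM^{\rm good})$, which is a subscheme of the separated scheme $\cH(r,d,L)$, and $h_{\cH}\circ f=h_{\cM^{\rm good}}$ is proper by part~1, the standard fact that a separated, finite-type morphism $g$ is proper whenever $g\circ f$ is proper for some surjective $f$ shows that $h_{\cH}$ is proper. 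For part~3, on the full subcategory $\GPH^{{\rm good},b}$ the equivalence of Theorem~\ref{thm:functor} is realised on objects by $E=p^{\ast}V$, $\phi=p^{\ast}\varphi$ (Proposition~\ref{prop:lf}, Theorem~\ref{thm:desin}), with $f$ and $p^{\ast}$ mutually inverse on the associated moduli; since $\tr(p^{\ast}\varphi)^{i}=p^{\ast}(\tr\varphi^{i})$ and $p^{\ast}$ identifies $H^{0}(Y,L^{i})$ with its image in $H^{0}(X,L_{0}^{i})$, one obtains $h_{\cM^{\rm good}}=h_{\cH^{b}}\circ f$ on $f^{-1}(\cH^{b}(r,d,L))$ and $h_{\cM^{\rm good}}\circ p^{\ast}=h_{\cH^{b}}$ on $\cH^{b}(r,d,L)$. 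For $\alpha=1$ this accounts for all of $\cH^{b}(r,d,L)$ by Theorem~\ref{thm:birat} and gives the stated commutative diagram; for $\alpha<1$ close to $1$ only $\cH^{b,s}(r,d,L)$ lies in the image of $f$, so the identities are to be read on $\cH^{b}(r,d,L)\cap f(\cM^{\rm good}(r,d,L_{0},\cF,\alpha))$.

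The hard part is the extension-and-semistable-reduction step of part~1: showing that a bounded characteristic polynomial suffices to fill in a one-parameter degeneration of good GPHs inside the moduli space. This amounts to transplanting Nitsure's and Simpson's properness arguments to good $L_{0}$-twisted GPHs, the technical subtlety being that the elementary modifications used there must respect the quasi-parabolic structure and the goodness condition simultaneously; this is precisely the analysis carried out in~\cite[Section~5]{bhosle:2014}.
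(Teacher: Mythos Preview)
The paper does not contain a proof of this theorem: it is a survey article, and the result is simply quoted from \cite[Corollary~5.2]{bhosle:2014} with no argument supplied (the text passes directly from the statement to the rank-one example in the next subsection). So there is no ``paper's own proof'' against which to compare your proposal.

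That said, your sketch is a faithful outline of the argument one finds in the original source. The properness in part~1 is indeed obtained in~\cite{bhosle:2014} via the valuative criterion, combining a Nitsure--Simpson type extension of the Higgs field (using boundedness of the characteristic coefficients to control pole order) with a Langton-style semistable reduction adapted so that the elementary modifications preserve both the generalised parabolic structure and the goodness condition~\eqref{eq:good}; you correctly flag this compatibility as the delicate point. Parts~2 and~3 are, as you say, formal consequences: the factorisation of $h_{\cM^{\rm good}}$ through $f$ follows because the trace invariants of $\phi$ and of the induced $\varphi$ agree under $p^{\ast}\colon H^{0}(Y,L^{i})\hookrightarrow H^{0}(X,L_{0}^{i})$, and the descent of properness along the surjection $f$ uses exactly the elementary lemma you invoke (surjectivity of $f$ is stable under base change, so universal closedness of $h_{\cH}\circ f$ forces universal closedness of $h_{\cH}$). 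The commutativity in part~3 then unwinds directly from Theorem~\ref{thm:functor} and Theorem~\ref{thm:birat}, as you indicate.

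In short: your proposal is correct and matches the strategy of the cited source, but there is nothing in the present paper to compare it with.
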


\subsection{Example in rank one}

In the following we review briefly the description in \cite[Section 2.4]{bhosle:2014}.

Consider $(V,\varphi)\in \cH(1,d,L)$ then $V$ can be locally free or not. If $V$ is locally free then $H^{0}(Y,\End(V)\otimes L)\cong H^{0}(Y,L)$. Therefore, the space parametrising the space of possible Higgs fields for the pair $(V,\varphi)$ is $H^{0}(Y,L)$.

If $V$ is torsion free but not locally free we get that the space parametrising Higgs fields for~$V$ is
\begin{gather*}
H^{0}(Y,\End(V)\otimes L)\cong H^{0}(Y,L\otimes p_{\ast}\cO_{X})\cong H^{0}(X,p^{\ast}L).
\end{gather*}

Hence, the moduli space $\cH(1,d,L)$ of $L$-twisted Hitchin pairs of degree $d$ on $Y$ is isomorphic to
\begin{gather*}
\big[J(Y)\times H^{0}(Y,L)\big] \sqcup \big[\big(\overline{J}(Y)-J(Y)\big)\times H^{0}(Y,L\otimes p_{\ast}(\cO_{X}))\big],
\end{gather*}
where $J(Y)$ denotes the Jacobian of $Y$ and $\overline{J}(Y)$ its compactification.

Notice that $\cH(1,d, L)\subset \overline{J}(Y)\times H^{0}(L\otimes p_{\ast}\cO_{X})$.

Recall from Remark \ref{rmk:line} that $M(1,d,\cF,\alpha)$ is isomorphic to the canonical desingularisation of $\overline{J}(Y)$ then
\begin{gather}\label{eq:M1}
\cM(1,d,L_{0},\cF,\alpha)\cong M(1,d,\cF,\alpha)\times H^{0}(Y,L\otimes p_{\ast}(\cO_{X})).
\end{gather}

Moreover, the canonical desingularisation (see \cite[Proposition 12.1]{oda-seshadri})
\begin{gather*}\nu\colon \ \widetilde{J}(Y)\rightarrow \overline{J}(Y)\end{gather*} induces a morphism
\begin{gather*}
\nu\times i\colon \ \widetilde{J}(Y)\times H^{0}(X,L_{0})\longrightarrow \overline{J}(Y)\times H^{0}(Y,L\otimes p_{\ast}(\cO_{X})).
\end{gather*}

\begin{Lemma}[{\cite[Lemma 2.12]{bhosle:2014}}]
\begin{gather*}
\cM^{\rm good}(1,d,L_{0})\cong \big[\nu^{-1}(J(Y))\times H^{0}(Y,L)\big]\sqcup \big[\nu^{-1}\big(\overline{J}(Y)-J(Y)\big)\times H^{0}(X,L_{0})\big]
\end{gather*}
with $L$ such that $L_{0}=p^{\ast}L$ and $H^{0}(Y,L)\subset H^{0}(X,p^{\ast}L)$ by the inclusion given by the pullback $\varphi\mapsto p^{\ast}\varphi$. Moreover, the morphism of moduli spaces
\begin{gather*}
f\colon \ \cM^{\rm good}(1,d,L_{0})\longrightarrow \cH(1,d,L)
\end{gather*}
restricts to an isomorphism on $\nu^{-1}(J(Y))\times H^{0}(Y,L)$ and it is a surjective two-to-one map on $\nu^{-1}\big(\overline{J}(Y)-J(Y)\big)\times H^{0}(X,L_{0})$.
\end{Lemma}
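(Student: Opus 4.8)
The plan is to analyse the rank-one case directly by unwinding the structure results already quoted, and then to track what the correspondence functor $F$ of Theorem~\ref{thm:functor} does on each stratum of the decomposition in~\eqref{eq:M1}. The starting point is the observation that a good $p^{\ast}L$-twisted GPH of rank $1$ is a triple $(E,\cF(E),\phi)$ with $E$ a line bundle on $X$, $\cF(E)^{j}\colon E_{x_{j}}\oplus E_{z_{j}}\supset F^{j}_{1}(E)\supset 0$ a line (so $\dim F^{j}_{1}(E)=1$), and $\phi\colon E\to E\otimes L_{0}$ satisfying $\phi_{D_{j}}(F^{j}_{1}(E))\subset F^{j}_{1}(E)$. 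By Remark~\ref{rmk:line} the underlying GPBs are parametrised by $\widetilde{J}(Y)=M(1,d,\cF,\alpha)$, so the whole moduli space $\cM^{\rm good}(1,d,L_{0})$ fibres over $\widetilde{J}(Y)$ with fibre the space of admissible Higgs fields. First I would split $\widetilde{J}(Y)$ along $\nu$ into $\nu^{-1}(J(Y))$, where the associated torsion-free sheaf $V$ on $Y$ is locally free, and its complement $\nu^{-1}(\overline{J}(Y)-J(Y))$, where $V$ fails to be locally free at (at least) one node.

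On the locally-free stratum, over $\nu^{-1}(J(Y))$ both projections $p_{x_{j}},p_{z_{j}}\colon F^{j}_{1}(E)\to E_{x_{j}},E_{z_{j}}$ are isomorphisms (Proposition~\ref{prop:lf}(1)), so $F^{j}_{1}(E)$ is the graph of a nonzero scalar; in rank one the condition~\eqref{eq:good} that $\phi$ preserve this graph forces $\phi_{x_{j}}$ and $\phi_{z_{j}}$ to agree under the identification, i.e.\ $\phi$ descends to a morphism $\varphi\colon V\to V\otimes L$ on $Y$. Concretely this is exactly the statement $H^{0}(Y,\End(V)\otimes L)\cong H^{0}(Y,L)$ from the rank-one example, and the inclusion $H^{0}(Y,L)\hookrightarrow H^{0}(X,p^{\ast}L)$ is $\varphi\mapsto p^{\ast}\varphi$. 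So over this stratum the fibre of $\cM^{\rm good}$ is $H^{0}(Y,L)$, giving the summand $\nu^{-1}(J(Y))\times H^{0}(Y,L)$, and $F$ sends $(E,\cF(E),p^{\ast}\varphi)$ to $(V,\varphi)$; this matches the bundle part of $\cH(1,d,L)$, namely $J(Y)\times H^{0}(Y,L)$. Injectivity and surjectivity of $f$ here follow because $\nu$ is an isomorphism over $J(Y)$ (the canonical desingularisation is biregular over the locus of line bundles) and the Higgs-field assignment $\varphi\leftrightarrow p^{\ast}\varphi$ is a bijection $H^{0}(Y,L)\to H^{0}(Y,L)$; hence $f$ restricts to an isomorphism on this stratum.

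On the non-locally-free stratum, over $\nu^{-1}(\overline{J}(Y)-J(Y))$ there is some node $y_{j}$ at which $V$ is not free, so by Proposition~\ref{prop:lf} one of $p_{x_{j}},p_{z_{j}}$ fails to be an isomorphism; in rank one the only possibility compatible with $\dim F^{j}_{1}(E)=1$ is that exactly one of the two projections is an isomorphism and the other is zero (case (2) with $k=0$, since case (3) is impossible when $\dim F^{j}_{1}=1$). Thus $F^{j}_{1}(E)=E_{x_{j}}\oplus 0$ or $0\oplus E_{z_{j}}$, and condition~\eqref{eq:good} becomes vacuous in one coordinate, so the admissible $\phi$ range over all of $H^{0}(X,\End(E)\otimes L_{0})=H^{0}(X,L_{0})$: this produces the summand $\nu^{-1}(\overline{J}(Y)-J(Y))\times H^{0}(X,L_{0})$. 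Passing through $F$, the induced Higgs field on $V$ lives in $H^{0}(Y,\End(V)\otimes L)\cong H^{0}(Y,L\otimes p_{\ast}\cO_{X})\cong H^{0}(X,p^{\ast}L)=H^{0}(X,L_{0})$, so the target fibre agrees with the source fibre; but the desingularisation $\nu$ is two-to-one over $\overline{J}(Y)-J(Y)$ (this is the classical Oda--Seshadri picture, \cite[Proposition~12.1]{oda-seshadri}: the two sheets correspond to the two choices of which projection $p^{j}_{i}$ is the isomorphism, i.e.\ which of the $\PP^{1}$-fibre points in Proposition~\ref{prop:lineGPB} one sits over), and $F$ identifies the two preimages because it only remembers the sheaf $V$, not which branch of $\cF^{j}$ was used. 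Hence $f$ is surjective and generically two-to-one there. The hard part is the last bookkeeping step — checking that the ``two sheets'' of $\nu$ over a non-locally-free sheaf really do match the two admissible quasi-parabolic flags at the bad node and that $F$ collapses exactly these, and that no extra identifications or ramification occur — but this is essentially the content of Remark~\ref{rmk:line} together with the explicit $\PP^{1}$-bundle description of Proposition~\ref{prop:lineGPB}, so it is a matter of assembling facts already in hand rather than new work.
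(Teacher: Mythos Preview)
The paper does not actually prove this lemma: it is a survey, and the statement is quoted from \cite[Lemma~2.12]{bhosle:2014} with only the surrounding discussion (the decomposition of $\cH(1,d,L)$ and the identification~\eqref{eq:M1}) serving as motivation. So there is no ``paper's own proof'' to compare against beyond that setup.

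That said, your argument is correct and is exactly the computation the preceding discussion is pointing toward. You stratify $\widetilde{J}(Y)$ along $\nu$, and on each stratum you compute the space of $\phi$ satisfying~\eqref{eq:good}: on $\nu^{-1}(J(Y))$ the flag is a graph and the invariance condition forces $\phi(x_{j})=\phi(z_{j})$, cutting $H^{0}(X,p^{\ast}L)$ down to $H^{0}(Y,L)$; on the complement the flag is $E_{x_{j}}\oplus 0$ or $0\oplus E_{z_{j}}$ and~\eqref{eq:good} is automatic, so all of $H^{0}(X,L_{0})$ is allowed. This is precisely what the paper's paragraphs on $H^{0}(Y,\End(V)\otimes L)$ before the lemma are computing on the $Y$-side, and your argument is the matching computation on the $X$-side. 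The two-to-one claim over the boundary then follows from the $\PP^{1}$-bundle description (Proposition~\ref{prop:lineGPB}) together with Remark~\ref{rmk:line}, as you say.

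Two small remarks. First, your dismissal of case~(3) of Proposition~\ref{prop:lf} is slightly misstated: when $\dim F^{j}_{1}=1$, case~(3) is not impossible but degenerate (one summand is zero), and it then coincides with case~(2) at $k=0$; your conclusion is unaffected. Second, the ``two-to-one'' statement is literally correct only for a single node (the setting of Proposition~\ref{prop:lineGPB}); with $n$ nodes the fibre of $\nu$ over a sheaf that is non-locally-free at $m$ of them has $2^{m}$ points. The lemma as stated, and your proof, should be read in the one-node context implicit in the rank-one example.
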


The Hitchin map in this case particularises in the following way.

For the moduli space of $L_{0}$-twisted GPH we get
\begin{gather*}
h_{\cM}\colon \ \cM(1,d,L_{0},\cF,\alpha)\longrightarrow H^{0}(X,L_{0}), \qquad (E,\cF(E),\phi)\mapsto \phi,
\end{gather*}
which has fibres isomorphic to $\widetilde{J}(Y)\cong M(1,d,\cF,\alpha)$ by (\ref{eq:M1}).

The map $h_{\cM}$ induces a map on the restriction to $\cM^{\rm good}(1,d,L_{0}\cF,\alpha)$
\begin{gather*}
h_{\cM^{\rm good}}\colon \ \cM^{\rm good}(1,d,L_{0},\cF,\alpha)\longrightarrow H^{0}(X,L_{0}),
\end{gather*}
which descends, through the birational map $f$ from Theorem \ref{thm:birat}, to the Hitchin map
\begin{gather*}
h_{\cH}\colon \ \cH(1,d,L)\longrightarrow H^{0}(Y,L\otimes p_{\ast}(\cO_{X})), \qquad (V,\varphi)\mapsto \varphi.
\end{gather*}

The following lemma describes the corresponding fibres.
\begin{Lemma}[{\cite[Lemma 2.13]{bhosle:2014}}]Let $s\in H^{0}(X,p^{\ast}L)\cong H^{0}(Y,L\otimes p_{\ast}(\cO_{X}))$ then
\begin{gather*}
h_{\cM^{\rm good}}^{-1}(s)= \begin{cases} \widetilde{J}(Y) & \mathrm{if} \ s\in H^{0}(Y,L),\\
\Pic ^{d}(X)\sqcup \Pic^{d}(X) & \mathrm{if} \ s\in H^{0}(Y,L\otimes p_{\ast}(\cO_{X}))- H^{0}(Y,L)\end{cases}
\end{gather*}
and
\begin{gather*}
h_{\cH}^{-1}(s)= \begin{cases} \overline{J}(Y) & \mathrm{if} \ s\in H^{0}(Y,L),\\
\overline{J}(Y)-J(Y) & \mathrm{if} \ s\in H^{0}(Y,L\otimes p_{\ast}(\cO_{X}))- H^{0}(Y,L).\end{cases}
\end{gather*}
\end{Lemma}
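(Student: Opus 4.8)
The plan is to compute both fibres directly from the rank-one descriptions already established. First recall from \eqref{eq:M1} that $\cM(1,d,L_{0},\cF,\alpha)\cong M(1,d,\cF,\alpha)\times H^{0}(Y,L\otimes p_{\ast}\cO_{X})$, and that the Hitchin map $h_{\cM}$ is simply the projection $(E,\cF(E),\phi)\mapsto\phi$ onto the factor $H^{0}(X,L_{0})\cong H^{0}(Y,L\otimes p_{\ast}\cO_{X})$. Hence for any $s$ the full fibre $h_{\cM}^{-1}(s)$ is a copy of $M(1,d,\cF,\alpha)\cong\widetilde{J}(Y)$ by Remark~\ref{rmk:line}. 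The content of the lemma is therefore entirely about which part of this $\widetilde{J}(Y)$ survives inside the \emph{good} locus $\cM^{\rm good}(1,d,L_{0},\cF,\alpha)$, and dually which part of $\cH(1,d,L)$ is hit.

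The key step is the case analysis on $s$ coming from the previous Lemma \cite[Lemma 2.12]{bhosle:2014}: $\cM^{\rm good}(1,d,L_{0})$ decomposes as $\nu^{-1}(J(Y))\times H^{0}(Y,L)$ glued to $\nu^{-1}(\overline J(Y)-J(Y))\times H^{0}(X,L_{0})$. So I would argue as follows. If $s\in H^{0}(Y,L)$, then $s$ lies in the Higgs-field space for \emph{both} strata, so $h_{\cM^{\rm good}}^{-1}(s)$ contains $\nu^{-1}(J(Y))$ from the first piece and $\nu^{-1}(\overline J(Y)-J(Y))$ from the second; their union is all of $\widetilde J(Y)$, giving the first line. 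If instead $s\in H^{0}(Y,L\otimes p_{\ast}\cO_{X})\setminus H^{0}(Y,L)$, then $s$ is \emph{not} a valid Higgs field on the locally free stratum (a Higgs field on a line bundle $V$ with $V$ locally free lives in $H^{0}(Y,\End V\otimes L)\cong H^{0}(Y,L)$), so the first piece contributes nothing, and only the second piece $\nu^{-1}(\overline J(Y)-J(Y))\times H^{0}(X,L_{0})$ remains. By \cite[Lemma 2.12]{bhosle:2014} the map $f$ is two-to-one there and $\nu$ restricted to $\nu^{-1}(\overline J(Y)-J(Y))$ is the exceptional locus of the desingularisation; for a nodal curve with $n$ nodes this preimage is, on each stratum, a disjoint union of two copies of $\Pic^{d}(X)$ (the two choices of how the handle data at a node degenerates correspond to $p^{j}_{1}$ or $p^{j}_{2}$ being non-iso), which yields $\Pic^{d}(X)\sqcup\Pic^{d}(X)$. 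Transporting through the birational morphism $f$ of Theorem~\ref{thm:birat}, the image of $\widetilde J(Y)$ is $\overline J(Y)$ and the image of $\nu^{-1}(\overline J(Y)-J(Y))$ is $\overline J(Y)-J(Y)$, giving the two lines for $h_{\cH}^{-1}(s)$.

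The main obstacle is the identification of $\nu^{-1}\big(\overline J(Y)-J(Y)\big)$ with two disjoint copies of $\Pic^{d}(X)$ in the $n$-node case: one must use the explicit structure of the Oda--Seshadri desingularisation \cite[Proposition 12.1]{oda-seshadri} together with Proposition~\ref{prop:lineGPB} (the $\PP^{1}$-bundle description of $M(1,d,\cF,\alpha)$ over $J^{d}(X)$) to see that the non-locally-free GPBs form exactly two sections of this $\PP^{1}$-bundle — namely where one of the two projections $p^{j}_{i}$ fails to be an isomorphism — each isomorphic to $J^{d}(X)=\Pic^{d}(X)$. Once this is in hand, everything else is bookkeeping with the decompositions already recorded in the excerpt, so I would present those verifications briefly and refer to \cite{bhosle:2014} for the remaining routine checks.
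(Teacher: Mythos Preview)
The review article does not supply a proof of this lemma; it simply records the statement and cites \cite[Lemma~2.13]{bhosle:2014}. Your argument is the natural one assembled from the ingredients already laid out in the subsection --- the product decomposition~\eqref{eq:M1}, the stratification of $\cM^{\rm good}$ from \cite[Lemma~2.12]{bhosle:2014}, and the $\PP^{1}$-bundle description of Proposition~\ref{prop:lineGPB} --- and it is correct.

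One point to tighten: the identification $\nu^{-1}\big(\overline{J}(Y)-J(Y)\big)\cong \Pic^{d}(X)\sqcup\Pic^{d}(X)$ holds as stated only for a \emph{single} node, which is the implicit setting of this example (Proposition~\ref{prop:lineGPB} takes $D=x+z$). For $n>1$ nodes $M(1,d,\cF,\alpha)$ is a $(\PP^{1})^{n}$-bundle over $\Pic^{d}(X)$ and the locus where some projection $p^{j}_{i}$ fails is a union of $2n$ divisors rather than two disjoint sections, so the fibre description would change. Your phrase ``for a nodal curve with $n$ nodes'' should therefore be replaced by an explicit restriction to $n=1$; with that adjustment the argument via the two sections $\{0\}$ and $\{\infty\}$ of the $\PP^{1}$-bundle (i.e., $F_{1}(E)=E_{z}$ or $F_{1}(E)=E_{x}$) is exactly right, and the transport to $h_{\cH}^{-1}(s)$ via $f$ is immediate.
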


\section[Representations of the fundamental group and Higgs bundles on the nodal curve]{Representations of the fundamental group\\ and Higgs bundles on the nodal curve}\label{sec:rephiggs}
In Section~\ref{sec:repr} we pointed out that there is no analogue of the celebrated Narasimhan--Seshadri theorem in the context of generalised parabolic structures. Nevertheless, Theorem~\ref{thm:repGPB} proved a~relation between unitary representations of a~nodal curve $Y$ and generalised parabolic bundles on its normalisation $X$. We recall here the work in~\cite{bhosle-parameswaran-singh} describing the situation for representations of the fundamental group of $Y$ into $\GL(r,\CC)$.

Let $Y$ be a nodal curve with $n$ nodes and $p\colon X\rightarrow Y$ its normalisation. Let $\rho\colon \pi_{1}(Y)\longrightarrow \GL(r,\CC)$ be a representation. Recall from Section~\ref{sec:repr} that $\pi_{1}(Y)\cong \pi_{1}(X)\ast \ZZ\ast \cdots\ast\ZZ$, where $\ast$ denotes the free product and it is taken $n$ times.

Let $\rho_{X}$ be the restriction of $\rho$ to $\pi_{1}(X)$. By the non-abelian Hodge theorem \cite[Proposition~1.5]{simpson:1992} $\rho_{X}$ defines a Higgs bundle $(E_{\rho_{X}},\phi_{\rho_{X}})$ on $X$ of rank $r$ and degree~$0$. Moreover, the element $g_{j}=\rho(1)\in\GL(r,\CC)$ gives an isomorphism $\sigma_{j}\colon (E_{\rho_{X}})_{x_{j}}\rightarrow (E_{\rho_{X}})_{z_{j}}$ on the fibres over~$x_{j}$ and~$z_{j}$. This provides us a QPB $(E,\cF(E))$ on~$X$ and consequently, a bundle $E_{\rho}$ on $Y$ by identifying the fibres on~$x_{j}$ and~$z_{j}$ through~$\sigma_{j}$.

The pushforward of the Higgs field $\phi_{\rho_{X}}$ satisfies
\begin{gather*}
p_{\ast}\phi_{\rho_{X}}\colon \ p_{\ast}E_{\rho_{X}}\longrightarrow p_{\ast}(E_{\rho_{X}}\otimes K_{X})=p_{\ast}(p^{\ast}(E_{\rho})\otimes K_{X})=E_{\rho}\otimes p_{\ast}K_{X},
\end{gather*}
where $K_{X}$ is the canonical bundle on $X$. Note that $p_{\ast}(K_{X})\subset K_{Y}$ and $p^{\ast}K_{Y}=K_{X}\big(\sum_{j}(x_{j}+z_{j})\big)$, so we can define the Higgs field $\phi_{\rho}\colon E_{\rho}\rightarrow E_{\rho}\otimes K_{Y}$ as the composition
\begin{gather*}
\phi_{\rho}\colon \ E_{\rho}\hookrightarrow p_{\ast}E_{\rho_{X}}\stackrel{p_{\ast}\phi_{\rho_{X}}}{\longrightarrow} E_{\rho}\otimes p_{\ast}K_{X}\hookrightarrow E_{\rho}\otimes K_{Y}.
\end{gather*}

By construction one has that $\rk(E_{\rho})=\rk(E_{\rho_{X}})=r$ and $\deg(E_{\rho})=\deg(E_{\rho_{X}})=0$.

The (semi)stability of $(E_{\rho},\phi_{\rho})$ on $Y$ corresponds to the (semi)stability of $(E_{\rho_{X}},\phi_{\rho_{X}})$ \cite[Lemma~7.1]{bhosle-parameswaran-singh}. Furthermore, a Higgs bundle $(E,\phi)$ on $Y$ is called \emph{strongly semistable} if for every $k\ge 1$ $\big({\otimes}^{k}E,\otimes^{k} \phi\big)$ is semistable, so the following theorem holds.

\begin{Theorem}[{\cite[Theorem 7.2]{bhosle-parameswaran-singh}}]\label{thm:Higgsrepr}
Let $Y$ be a nodal curve, then to a representation $\pi_{1}(Y)\rightarrow \GL(r,\CC)$ one can associate a strongly semistable Higgs bundle of rank $r$ and degree $0$. If the restriction of $\rho$ to the fundamental group of the normalisation $X$ of $Y$ is irreducible with genus $X\ge 2$ then the associate Higgs bundle is stable.
\end{Theorem}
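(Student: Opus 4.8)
The plan is to make explicit the construction already sketched in the text and then verify the two claimed properties (strong semistability, and stability under the irreducibility hypothesis) by pulling everything back to the normalisation $X$, where the classical non-abelian Hodge correspondence applies. First I would start from a representation $\rho\colon\pi_1(Y)\to\GL(r,\CC)$, restrict it to $\pi_1(X)\subset\pi_1(Y)$ via the free-product decomposition $\pi_1(Y)\cong\pi_1(X)\ast\ZZ\ast\cdots\ast\ZZ$, and invoke Simpson's theorem to obtain a polystable Higgs bundle $(E_{\rho_X},\phi_{\rho_X})$ on $X$ of rank $r$ and degree $0$; if $\rho_X$ is irreducible this Higgs bundle is stable. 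The images $g_j=\rho(1_j)\in\GL(r,\CC)$ of the free generators give isomorphisms $\sigma_j\colon (E_{\rho_X})_{x_j}\to(E_{\rho_X})_{z_j}$, whose graphs define $F_1^j(E_{\rho_X})$, hence a QPB structure with both projections $p_{x_j},p_{z_j}$ isomorphisms; by Proposition~\ref{prop:lf}(1) the associated torsion free sheaf $E_\rho$ on $Y$ is locally free, and by the exact sequence~(\ref{eq:fV}) it has rank $r$ and degree $0$. The Higgs field $\phi_\rho$ is then the composition $E_\rho\hookrightarrow p_\ast E_{\rho_X}\xrightarrow{p_\ast\phi_{\rho_X}} E_\rho\otimes p_\ast K_X\hookrightarrow E_\rho\otimes K_Y$ displayed in the text, which is well defined because $p^\ast K_Y=K_X(\sum_j(x_j+z_j))$ and the graph condition on $F_1^j$ makes $p_\ast\phi_{\rho_X}$ land in $E_\rho\otimes p_\ast K_X$; concretely $\phi_\rho$ is just $\phi_{\rho_X}$ read downstairs, compatible with the identifications $\sigma_j$ since $\phi_{\rho_X}$ commutes with the $\pi_1(X)$-action and $g_j$ is central for the relevant fibre data.

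Next I would establish strong semistability. The key point is that the whole construction is natural in tensor powers: for every $k\ge 1$ the restriction of $\rho^{\otimes k}\colon\pi_1(Y)\to\GL(r^k,\CC)$ to $\pi_1(X)$ is $\rho_X^{\otimes k}$, which corresponds under non-abelian Hodge to $({\otimes}^kE_{\rho_X},\otimes^k\phi_{\rho_X})$, a polystable (degree $0$) Higgs bundle on $X$ — polystability of Higgs bundles of degree $0$ is preserved under tensor product, since it corresponds to semisimplicity of the representation, and a tensor product of semisimple representations of a group is semisimple. Then by \cite[Lemma~7.1]{bhosle-parameswaran-singh}, invoked for the bundle ${\otimes}^kE_\rho$ with Higgs field $\otimes^k\phi_\rho$, semistability on $Y$ of $({\otimes}^kE_\rho,\otimes^k\phi_\rho)$ is equivalent to semistability on $X$ of $({\otimes}^kE_{\rho_X},\otimes^k\phi_{\rho_X})$, which we just saw holds. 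Since ${\otimes}^kE_\rho\cong{\otimes}^k(E_\rho)$ and $\otimes^k\phi_\rho$ pulls back to $\otimes^k\phi_{\rho_X}$ under $p$, this gives semistability of all tensor powers, i.e.\ $(E_\rho,\phi_\rho)$ is strongly semistable by definition.

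Finally, for the stability claim under the hypothesis that $\rho_X$ is irreducible and $g(X)\ge 2$: here $\rho_X$ irreducible means the Higgs bundle $(E_{\rho_X},\phi_{\rho_X})$ is stable on $X$ (this is the precise content of Simpson's correspondence, \cite[Proposition~1.5]{simpson:1992}, together with $g(X)\ge 2$ so that $\deg 0$ stable Higgs bundles exist and the correspondence is with irreducible flat bundles rather than just semisimple ones). Applying \cite[Lemma~7.1]{bhosle-parameswaran-singh} once more, now in the stable case, stability of $(E_{\rho_X},\phi_{\rho_X})$ on $X$ implies stability of $(E_\rho,\phi_\rho)$ on $Y$: any Higgs subsheaf $F\subset E_\rho$ pulls back to a Higgs subsheaf $p^\ast F\subset p^\ast E_\rho=E_{\rho_X}$ of the same slope (degrees and ranks are preserved by the GPB--torsion-free-sheaf dictionary of Section~\ref{sec:GPBnodal}), and $\phi_{\rho_X}$-invariance of $p^\ast F$ is inherited because $\phi_\rho$ is $\phi_{\rho_X}$ downstairs. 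I expect the main obstacle to be the careful bookkeeping in this last step — verifying that slope comparison for subsheaves on the singular curve $Y$ matches slope comparison for the pulled-back subsheaves on $X$, controlling the contribution of the nodes (torsion, the difference between $p_\ast K_X$ and $K_Y$, and saturation of subsheaves across the nodes) — but this is exactly what Lemma~7.1 of \cite{bhosle-parameswaran-singh} is designed to handle, so one mainly needs to check that its hypotheses apply to the pair $(E_\rho,\phi_\rho)$ and to its tensor powers. The degree and rank assertions $\rk(E_\rho)=r$, $\deg(E_\rho)=0$ are immediate from the flatness of $E_{\rho_X}$ and the exact sequence~(\ref{eq:fV}), as already noted in the text.
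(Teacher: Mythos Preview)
Your proposal is correct and follows the same route as the paper's sketch: build $(E_\rho,\phi_\rho)$ on $Y$ from the Simpson Higgs bundle $(E_{\rho_X},\phi_{\rho_X})$ on $X$ together with the gluing isomorphisms $\sigma_j$ coming from the free generators, then transfer (semi)stability between $X$ and $Y$ via \cite[Lemma~7.1]{bhosle-parameswaran-singh}, with the tensor-power step (polystability of $({\otimes}^k E_{\rho_X},\otimes^k\phi_{\rho_X})$ via semisimplicity of $\rho_X^{\otimes k}$) supplying strong semistability --- this is exactly what the paper means by ``so the following theorem holds'' after recalling Lemma~7.1 and the definition of strong semistability.

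One small correction worth making: your parenthetical claim that $\phi_\rho$ is well-defined because ``$\phi_{\rho_X}$ commutes with the $\pi_1(X)$-action and $g_j$ is central for the relevant fibre data'' is not the operative reason, and in general no compatibility between $\phi_{\rho_X}|_{x_j},\phi_{\rho_X}|_{z_j}$ and $\sigma_j$ is required or available. The Higgs field descends for the sheaf-theoretic reason already displayed in the text: by the projection formula $p_\ast(E_{\rho_X}\otimes K_X)=E_\rho\otimes p_\ast K_X$, and then one uses the inclusion $p_\ast K_X\subset K_Y$ (Rosenlicht differentials with zero residues sit inside those with opposite residues at the node branches). This is why the target is $E_\rho\otimes K_Y$ rather than $E_\rho\otimes L$ for some $L$ with $p^\ast L=K_X$, and why no ``good GPH'' condition on $\phi_{\rho_X}$ relative to the graphs of $\sigma_j$ enters the construction.
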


The converse of Theorem~\ref{thm:Higgsrepr} is not true in general. In \cite[Section~7.3]{bhosle-parameswaran-singh} the authors constructed a stable Higgs bundle of degree $0$ on $Y$ which does not come from a representation of~$\pi_{1}(Y)$.

\section{Further comments and open problems}

The non-abelian Hodge correspondence actually refers to real analytic isomorphisms between three moduli spaces, which originally were: the moduli space of representations of the fundamental group of a compact Riemann surface $X$, the moduli spaces of flat connections of a principal bundle on $X$, and the moduli space of Higgs bundles on $X$. It can be thought of as a combination of the Riemann--Hilbert correspondence relating moduli spaces of flat connections and representations of the fundamental group into a Lie group $G$ and the Hitchin--Kobayashi correspondence relating Hermitian--Einstein connections and holomorphic bundle structures. The latter actually provides a general set up for the Narasimhan--Seshadri theorem, which can be thought as a Hitchin--Kobayashi correspondence for $G=U(r)$.

In \cite{bhosle-biswas-hurtubise} Bhosle, Biswas and Hurtubise, motivated by the question of how Narasimhan--Seshadri theorem should look like for generalised parabolic bundles, constructed compact moduli spaces of Grassmanian-framed bundles over a Riemann surface and compact moduli spaces for the representation theoretic side of the correspondence, that is moduli spaces of unitary connections with simple poles and arbitrary residues. Their construction shows that GPBs should then correspond to unitary connections with simple poles on~$X$ such that the eigenvalues at the points~$x_j$ are minus the eigenvalues at the points~$z_j$. All of this is developed to some extent in \cite[Section~5.2]{bhosle-biswas-hurtubise} but not yet proven.

The study of Higgs bundles on singular curves has attracted the attention of many authors in the last years and there are many advances in this direction using different techniques. In~\cite{balaji-barik-nagaraj} Balaji, Barak and Nagaraj construct a degeneration of the moduli space of Higgs bundles on smooth curves, as the smooth curve degenerates to an irreducible nodal curve~$Y$ with a single node. In~\cite{ mazzeo-swoboda-weiss-witt, swoboda:2017} the authors also study degenerations of a curve and the corresponding moduli spaces but from an analytic point of view. Nevertheless, it is still an open question to understand the relation of these moduli spaces coming from degenerations in the algebraic or analytic context with the moduli spaces considered by Bhosle in~\cite{bhosle:2014}.

Finally, in \cite{bhosle:2001} Bhosle extends the notion of generalised parabolic bundles to principal $G$-bundles for $G$ any reductive algebraic group. The extension to generalised parabolic principal $G$-Higgs bundles is still an open problem as well as how the Hitchin fibration looks like and the spectral theory involved for any~$G$.

\subsection*{Acknowledgements}

The author would like to thank U.N.~Bhosle for very useful explanations, the referees for a~detailed review of the manuscript, and L.~Schaposnik for the organisation of the \emph{Workshop on the geometry and physics of Higgs bundles~I}. The author also acknowledges support by L.P.~Schaposnik's UIC Start up fund NSF RTG grant DMS-1246844, the GEAR Network NSF DMS grants 1107452, 1107263 and 1107367 ``RNMS: GEometric structures And Representation varieties'', and the Marie Sklodowska Curie grant GREAT~- DLV-654490.

\pdfbookmark[1]{References}{ref}
\LastPageEnding

\end{document}